\documentclass[11pt]{amsart}
\usepackage[utf8]{inputenc}

\usepackage{amsmath,amsthm,amssymb,mathscinet,bbm}
\usepackage{parskip}
\usepackage{geometry}
\usepackage{xcolor} 
\usepackage{xfrac, nicefrac}
\usepackage{exscale, relsize}
\usepackage{xurl}

\usepackage{mathtools} 
\usepackage[hidelinks]{hyperref}
\author{Alexandros Kalogirou}
\address{Erd\H{o}s Center \\
R\'enyi Institute \\
Budapest, H-1053 \\}
\email{KALOGIRA@email.sc.edu}

\author{Andrew Lott}
\address{Department of Mathematics \\ University of Georgia \\ Athens, GA 30602}
\email{Andrew.Lott@uga.edu}

\author{\'Akos Magyar}
\address{Department of Mathematics \\ University of Georgia \\ Athens, GA 30602}
\email{amagyar@uga.edu}

\author{Akash Singha Roy}
\address{Charles University\\ Faculty of Mathematics and Physics \\ Department of Algebra\\ Sokolovsk\'a 83, 186 75 Praha 8\\ Czech Republic}
\email{akash01s.roy@gmail.com}

\subjclass[2020]{Primary 11B30; Secondary 11B05, 11L07, 11L15, 11P55}

\usepackage{accents}

\renewcommand{\pod}[1]{\allowbreak\mathchoice
  {\if@display \mkern 18mu\else \mkern 8mu\fi (#1)}
  {\if@display \mkern 18mu\else \mkern 8mu\fi (#1)}
  {\mkern4mu(#1)}
  {\mkern4mu(#1)}
}
\usepackage{graphicx}
\DeclareMathAlphabet{\curly}{U}{rsfs}{m}{n}

\newcommand\Z{\mathbb{Z}}
\allowdisplaybreaks

\usepackage{cleveref}
\crefname{section}{§}{§§}
\Crefname{section}{§}{§§}

\newcommand\NatNos{\mathbb N}

\newtheorem{thm}{Theorem}[section]
\newtheorem{cor}[thm]{Corollary}

\newtheorem{lem}[thm]{Lemma}

\theoremstyle{remark}
\newtheorem*{rmk}{Remark}

\newcommand\reals{\mathbb R}

\newcommand\Ree{\mathrm{Re}}
\newcommand\bbm{\mathbbm 1}

\newtheorem{innercustomgeneric}{\customgenericname}
\providecommand{\customgenericname}{}
\newcommand{\newcustomtheorem}[2]{
  \newenvironment{#1}[1]
  {
   \renewcommand\customgenericname{#2}
   \renewcommand\theinnercustomgeneric{##1}
   \innercustomgeneric
  }
  {\endinnercustomgeneric}
}

\newcustomtheorem{customthm}{Theorem}
\newcustomtheorem{customlemma}{Lemma}
\newcommand\sm\setminus

\newcommand\nuNhatalpha{\widehat \nu_N(\alpha)}

\newcommand\enalpha{e(\alpha n)}

\newcommand\Mqa{\mathcal M(q, a)}
\newcommand\totalMajorArc{\mathcal M}
\newcommand\OneAHat{\widehat\bbm_A}
\newcommand\OneAHatalpha{\OneAHat(\alpha)}

\newcommand\SkqaqPower{\left(\frac{S_k(q, a)}q\right)^s}

\newcommand\nukNhatalpha{\widehat \nu_{N}(\alpha)}
\newcommand\muNhatalpha{\widehat \mu_{N} (\alpha)}

\newcommand\Aks{\mathcal A_{k, s}}

\newcommand{\N}{\mathbb{N}}
\newcommand{\al}{\alpha}
\newcommand{\be}{\beta}

\newcommand{\eps}{\varepsilon}
\newcommand{\si}{\sigma}

\numberwithin{equation}{section}
\begin{document}
\title[Furstenberg--S\'ark\"ozy for sums of even number of odd powers]{The Furstenberg--S\'ark\"ozy theorem for sums of an even number of odd powers} 
\keywords{S\'ark\"ozy's theorem, van der Corput property, circle method}
\begin{abstract}
  We obtain a Furstenberg--S\'ark\"ozy-type result for sets $A\subset [N]$ whose difference set $A-A$ does not contain the sum of 
  $s$-many $k$-th powers of positive integers, with $k>1$ odd and $s>0$ even. Namely, we prove that such sets must satisfy a power-saving bound $|A| \, \ll \, N^{1-\frac1k\min\{s \, \sigma_k, \, 1/2\}+\eps}$ for any fixed $\eps>0$, where $\sigma_k >0 $ is any admissible saving in a classical one-variable Weyl estimate. In particular, we can take $\sigma_k=\max\left\{{2^{1-k}}, \, \frac{1}{k(k-1)}\right\}$ using the classical theory and the best currently available bounds for classical Weyl sums. A greedy construction produces a set $A\subset[N]$ with $|A|\gg N^{1-s/k}$ for which $A-A$ contains no sum of $s$-many positive $k$-th powers, so our power-saving bound is of the correct shape. 
\end{abstract}
\maketitle

\section{Introduction} 

A conjecture of Lov\'asz, proved independently by Furstenberg (using ergodic theory) and S\'ark\"ozy (using the circle method), asserts that if $A\subset[N]$ and $A-A$ does not contain any nonzero integer of the form $n^2$ with $n\in\Z$, then $|A|=o(N)$ \cite{Furstenberg77,Sarkozy78}. The strongest quantitative form currently known, due to Green and Sawhney \cite{GreenSawhney}, asserts that for any such $A \subset [N]$, we have 
\[
|A|\ll N\exp\bigl(-c\sqrt{\log N}\bigr)
\]
for some absolute constant $c>0$. Recently, Adajar, Agrawal, Choudhuri, Chuah, Fan, Hegde, Lott, Nandakumar, and Ponagandla \cite{IntersectiveGroup} combined the method of Green and Sawhney with the exponential sum estimates of Rice \cite{Rice19} to treat general \textit{intersective}\footnote{We say that $h \in \Z[T]$ is intersective if $\deg h \ge 2$ and $h(\NatNos) \cap q\Z \ne \emptyset$ for every $q \in \NatNos$.} polynomials. More precisely, they show that if $h\in\Z[T]$ is intersective of degree at least two, $A\subset[N]$, and $A-A$ does not contain any nonzero integer of the form $h(n)$ with $n\in\N$, then, for every fixed $d\in(0,1/2)$,
\[
|A|\ll_{h,d}N\exp\bigl(-c_{h}(\log N)^d\bigr).
\]
Doyle and Rice \cite{DoyleRice} obtained similar bounds for a broad class of intersective multivariable polynomials $P\in\Z[x_1,\dots,x_\ell]$ of degree at least two: if $A\subset[N]$ and $A-A$ does not contain any nonzero integer of the form $P(x_1,\dots,x_\ell)$ with $x_1,\dots,x_\ell\in\Z$, then
\[
|A|\ll_P N\exp\bigl(-c_P(\log N)^\mu\bigr)
\]
for some $\mu=\mu(\deg P,\ell)>0$. In addition, Fan and Lott \cite{fanlott} used the quantitative van der Corput method to prove that if $A\subset[N]$ and $A-A$ does not contain any nonzero integer of the form $x^2+y^2$ with $x,y\in\Z$, then
\[
|A|\ll_\epsilon N^{7/8+\epsilon}
\]
for every fixed $\epsilon>0$.

Our result is motivated by the developments above. Let $k>1$ be a fixed odd integer, $s>0$ be a fixed even integer, and let
\[
\Aks=\{a_1^k+ \dots + a_s^k:\ a_1, \dots, a_s\in \N\}.
\]
We study sets $A\subset [N]$ whose difference set $A-A$ contains no element of $\mathcal A_{k, s}$. Our purpose is to obtain a power saving bound of the form $|A|\ll_k N^{1-\tau_k}$ for some $\tau_k>0$. 
The exact value of the constant $\tau_k$ is dependent on the admissible exponent $\sigma_k>0$ in the classical Weyl estimate
\begin{equation}\label{1.1}
\sum_{m\leq X} e(\al m^k)\ll_{k,\delta} X^{1+\delta} \left(\frac{1}{q}+\frac{1}{X}+\frac{q}{X^{1/k}}\right)^{\sigma_k},\qquad (e(\al)=e^{2\pi i\al})
\end{equation}
for all $X \ge 0$, and for all $\alpha \in \reals$, $a \in \Z$ and $q \in \NatNos$ satisfying $\gcd(a, q)=1$ and $|\al-a/q|\leq q^{-2}$. Note that any exponent admissible in \eqref{1.1} necessarily satisfies $\si_k\leq 1/k$: Indeed, for any prime $p\nmid k$, taking $X=q=p^k$ and $\al=a/p^k$ gives
\[
\sum_{m\leq X} e(\al m^k)=p^{k-1}=X^{1-1/k}.
\]
Our first main result is the following.
\begin{thm}\label{thm:Sarkozyk} Fix any odd integer  $k>1$, and any positive even integer $s \in [k]$. For all $N\in\N$ and for any set $A\subset [N]$ satisfying $(A-A)\cap \Aks=\emptyset$, we have
\[
|A|\ll_{k,s, \epsilon}N^{1-\frac1k\min\{s \, \sigma_k, \, 1/2\}+\epsilon}\  \text{for any fixed}\ \epsilon >0.
\]
\end{thm}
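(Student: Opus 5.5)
The plan is a van der Corput / positive-definiteness argument, in the spirit of Fan--Lott, exploiting that $s$ is even and $k$ is odd. Fix $M=\lfloor (N/s)^{1/k}\rfloor$ and a cutoff $M'\le M$ to be chosen. Write $s=2t$ and let $f(\alpha)=\sum_{m\le M'} e(\alpha m^k)$. Consider the measure $\nu$ on $\Z$ given by the representation function of $\mathcal A_{k,s}$ restricted to $a_i\le M'$; its Fourier transform is $f(\alpha)^s$. This is not real or nonnegative, so I would symmetrize. Since $k$ is odd, $(-a)^k=-a^k$. Hence $|f(\alpha)|^{s}=f^t\overline{f}^t$ is the Fourier transform of the measure $\mu$ counting $a_1^k+\dots+a_t^k-b_1^k-\dots-b_t^k$. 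That measure is supported on differences of two elements of $\mathcal A_{k,t}$, not on $\mathcal A_{k,s}$, so it is the wrong one. Instead I would use $\nu$ directly. The hypothesis gives $\sum_{x,y\in A}\nu(x-y)=0$, i.e.
\[
\int_0^1 |\widehat{1_A}(\alpha)|^2 f(\alpha)^s\,d\alpha=0 .
\]
Taking real parts is what the evenness of $s$ is for. We have $f(-\alpha)=\overline{f(\alpha)}$ and $|\widehat{1_A}|^2$ is even, so the integral equals $\int|\widehat{1_A}|^2\,\mathrm{Re}(f^s)$. I would then combine the minor-arc smallness of $f$ with a lower bound on the major arcs.

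The core is a density-increment-free L$^2$ argument. Split $[0,1)$ into the major arc $\mathfrak M$ near $0$, say $|\alpha|\le \eta/ (M')^k$, and its complement. On $\mathfrak M$ we have $f(\alpha)\approx M'$ with a small argument. So $\mathrm{Re}f^s\ge c (M')^s$ for $|\alpha|\le c'/(M')^k$, and this contributes at least $c(M')^s\int_{|\alpha|\le c'(M')^{-k}}|\widehat{1_A}|^2\gg (M')^{s-k}|A|^2$. The last step is by the standard Fejér-kernel lower bound: $\int_{|\alpha|\le \delta}|\widehat{1_A}|^2\ge \delta |A|^2/N$-type estimates, using $\delta N\ge1$ after noting $A\subset[N]$ and choosing $(M')^k\le N$. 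Actually the Fejér positivity gives $\gg |A|^2 (M')^{-k}$ when $(M')^k\lesssim N$.

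Other major arcs $\alpha$ near $a/q$ with $q\ge2$ are the delicate point. There $\mathrm{Re} f^s$ may be negative with size $(M'|S(q,a)|/q)^s$. I would handle them via \eqref{1.1}: for any $\alpha$ with $|\alpha|\ge c'(M')^{-k}$, Dirichlet approximation with $Q=(M')^{k/2}$ gives either $q$ large or $\alpha$ away from rationals with small denominators, and I would aim for a uniform bound $|f(\alpha)|\ll (M')^{1+\delta-\sigma'}$. Here $\sigma'=\min\{k\sigma_k,\cdot\}$ is measured in terms of $N$. The case $q\le Q$ with $\alpha$ close to $a/q$ is where the $\min\{\cdot,1/2\}$ should emerge: the term $q/X^{1/k}$ and the major-arc approximation $|f|\ll M'q^{-1/k}(1+ (M')^k|\beta|)^{-1/k}$ give saving $(M')^{-1/2}$-ish when $q$ or $|\beta|$ is at scale $(M')^{k/2}$.

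Combining, I get
\[
(M')^{s-k}|A|^2\ll \sup_{\alpha\notin\mathfrak M}|f(\alpha)|^s\int|\widehat{1_A}|^2= \sup|f|^s\,|A| .
\]
This yields $|A|\ll (M')^{k}\,(\sup|f|/M')^s$. With $(M')^k\asymp N$ and $\sup|f|/M'\ll N^{-\frac1k\min\{s\sigma_k,1/2\}/s+\epsilon}$ this gives the theorem. The main obstacle is bounding $\sup|f|$ on the minor arcs (including the $q\ge2$ major arcs) at the claimed strength. The sign issue on non-principal major arcs also needs care. There I would first try a pruning step: restrict the $a_i$ to a residue class modulo a suitable $W$ (a $W$-trick), so that $\mathrm{Re} f^s\ge0$ there, or those arcs are killed. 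The restriction $s\le k$ should enter precisely so that the target saving $s\sigma_k/k$ does not exceed what the single-variable Weyl bound raised to the $s$-th power provides.
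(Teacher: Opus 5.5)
The step that fails is
\[
(M')^{s-k}|A|^2\ll \sup_{\alpha \text{ away from } 0}|f(\alpha)|^s\,|A| .
\]
This supremum has no power saving at all. Near $a/q$ you have $f(\alpha)=q^{-1}S_k(q,a)\int_0^{M'}e(\beta t^k)\,dt+O\bigl(q^{1/2+\epsilon}(1+(M')^k|\beta|)^{1/2}\bigr)$, and $S_k(q,a)/q$ is of constant size for suitable small $q$. For example, for $k=3$ the cubes mod $7$ give $S_3(7,1)=1+6\cos(2\pi/7)$, so $|f(1/7)|\approx 0.68\,M'$. Likewise, for $q=p^k$ with $p\nmid ak$ one has $S_k(p^k,a)=p^{k-1}$, so $|f(a/p^k)|\approx M'/p$.

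So on these arcs $|f|^s$ is as large as at $0$, and bounding $-\mathrm{Re}(f^s)$ by $|f|^s$ there only gives $|A|\ll N$. No choice of Dirichlet parameter changes this. The $W$-trick makes it worse. If you restrict the $a_i$ to $b \bmod W$, then for every $q\mid W$ you get $f(a/q)\approx e(ab^k/q)\,M'/W$, which is full size, with phase $e(sab^k/q)$ of arbitrary sign. The arcs with $\gcd(q,W)=1$ are left unchanged. Your account of where $s\le k$ enters (the Weyl step) is also not right; see below.

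What is missing is the paper's key observation: on every major arc the \emph{main term} of the Fourier transform of the weight is nonnegative, so only the error term can be negative. This needs three ingredients.
\begin{enumerate}
\item[(i)] $S_k(q,a)$ is real for odd $k$ (substitute $r\mapsto -r$), so $(S_k(q,a)/q)^s\ge 0$ for even $s$. This, not the passage to real parts (which works for any $s$), is what the parity hypotheses are for.
\item[(ii)] The singular integral must have nonnegative real part. With your sharp cutoff it is $\bigl(\int_0^{M'}e(\beta t^k)\,dt\bigr)^s$, whose real part $|\cdot|^s\cos(s\theta)$, with $\theta=\arg\int_0^{M'}e(\beta t^k)\,dt$, has no reason to be nonnegative for all $\beta$. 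The paper instead takes $\mu_N(d)=r_{k,s}(|d|)e^{-|d|/N}$. This damps in the variable $n=a_1^k+\dots+a_s^k$ and weights by $a_1^{c_1}\cdots a_s^{c_s}$ with $c_1+\dots+c_s=k-s$. The singular integral then becomes $\int_0^\infty\cos(2\pi\beta w)e^{-w/N}\,dw=N/(1+(2\pi\beta N)^2)>0$. More generally it is a positive multiple of $\cos(\rho\arctan\xi)$ with $\rho=(s+C)/k\le 1$. This is where $s\le k$ is used.
\item[(iii)] You need a major-arc asymptotic for the whole $s$-fold sum with a single error $O\bigl(N^{1-1/k}q^{1/2+\epsilon}(1+N|\beta|)^{1/2}\bigr)$. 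The paper gets this by induction on $s$ plus partial summation (Lemmas~\ref{lem:rkExpSum} and~\ref{lem:MajorArcEstk}).
\end{enumerate}

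On the arcs $q\le N^{1/k}$, $|\beta|\le 1/(qN^{1-1/k})$, that error is $O(N^{1-1/(2k)+\epsilon})$. This is the real source of the $1/2$ in $\min\{s\sigma_k,1/2\}$. Only the remaining $\alpha$, whose Dirichlet denominator lies in $(N^{1/k},N^{1-1/k}]$, are handled by the $s$-th power of the one-variable Weyl bound. That part of your plan is sound.
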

The classical Weyl estimate allows us to take $\si_k = 2^{1-k}$ (see chapter 2 of \cite{Vau81}), while the best currently available bound in \eqref{1.1} \cite{Bour17} (for $k>5$) allows us to take  $\si_k=\frac{1}{k(k-1)}$.  
In particular, we can take $\si_3 = 1/4$ and $\si_5 = 1/16$, which by Theorem \ref{thm:Sarkozyk} lead to the bounds $|A| \,  \ll_\eps \, N^{5/6+\eps}$ for $k=3, s=2$, and $|A| \,  \ll_\eps \, N^{1-s/80+\eps}$ for $k=5$, $s \in \{2, 4\}$. For odd $k > 5$, Theorem \ref{thm:Sarkozyk} yields $|A| \, \ll_{k, \, s, \, \eps} \, N^{1-s/(k^2(k-1))+\eps}$ for any even $s \in [k-1]$. 

We now give a simple lower bound showing that the power-saving shape of the upper bound in Theorem \ref{thm:Sarkozyk} is sharp. Indeed, since any $n \in \Aks \cap [N]$ can be written as $n=a_1^k+\dots+a_s^k$ with $1\leq a_1,\dots,a_s\leq N^{1/k}$, we have the trivial bound
\[
|\Aks\cap[N]|\leq N^{s/k},
\]
so the greedy algorithm produces a set $A$ with $(A-A) \cap \Aks = \emptyset$ and 
\[
|A|
\gg N^{1-s/k}.
\]

For any odd integers $k_1, \dots, k_s \ge 1$, if a set avoids integers of the form $a_1^{k_1}+ \dots +a_s^{k_s}$ with $a_1, \dots, a_s \in \NatNos$, then it also avoids integers of the form $a_1^K + \dots +a_s^K$, where $K \coloneqq [k_1, \dots, k_s]$ is the least common multiple of $k_1, \dots, k_s$.  Hence, 
Theorem \ref{thm:Sarkozyk} admits the following generalization.  
\begin{thm}\label{thm:SarkozykDiff}
Fix any odd $k_1, \dots, k_s \in \NatNos$, at least one of which is not $1$. Let $K \coloneqq [k_1, \dots, k_s]$, and fix any positive even integer $s \le K$. Then for any set $A\subset [N]$ such that its difference set $A-A$ does not contain any integer of the form $a_1^{k_1}+\dots+a_s^{k_s}$, with $a_1, \dots, a_s \in\N$, we have   
$$|A| \, \ll_{s, \, k_1,  \,  \dots,  \, k_s,  \, \epsilon} ~ N^{1 \, - \, \frac1K\min\{s\sigma_K, \, 1/2\} \, + \, \epsilon}\text{ for any fixed }\epsilon>0.$$ 
\end{thm}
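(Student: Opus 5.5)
The plan is to reduce Theorem \ref{thm:SarkozykDiff} directly to Theorem \ref{thm:Sarkozyk} with $k=K$, using the observation made just before the statement.

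\textbf{Step 1: the exponent $K$ is admissible.} Each $k_i$ is odd, so their least common multiple $K$ is odd. At least one $k_i$ exceeds $1$, and $k_i \mid K$, so $K \ge k_i > 1$. The hypotheses also give $s$ positive and even with $s \le K$, that is, $s \in [K]$. Hence the pair $(K,s)$ meets every hypothesis of Theorem \ref{thm:Sarkozyk}.

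\textbf{Step 2: the containment.} Write $d_i = K/k_i \in \NatNos$. For $b_1,\dots,b_s \in \NatNos$, set $a_i = b_i^{d_i} \in \NatNos$. Then $a_i^{k_i} = b_i^{K}$, so
\[
b_1^K+\dots+b_s^K = a_1^{k_1}+\dots+a_s^{k_s}.
\]
This shows that $\mathcal A_{K,s}$ is contained in the set of integers of the form $a_1^{k_1}+\dots+a_s^{k_s}$ with $a_i\in\NatNos$. If $A-A$ contains no integer of the latter form, it follows that $(A-A)\cap \mathcal A_{K,s}=\emptyset$.

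\textbf{Step 3: conclusion.} Apply Theorem \ref{thm:Sarkozyk} with $k=K$. This gives
\[
|A| \ll_{K,s,\epsilon} N^{1-\frac1K\min\{s\sigma_K,1/2\}+\epsilon}.
\]
Since $K$ is determined by $k_1,\dots,k_s$, the implied constant depends only on $s,k_1,\dots,k_s,\epsilon$, as claimed.

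There is no real obstacle here; all the analytic content sits in Theorem \ref{thm:Sarkozyk}. The only points needing care are that $K$ is odd and greater than $1$, and that $s\le K$, so that the earlier theorem applies.
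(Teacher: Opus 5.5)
Your proposal is correct and follows the paper's own argument. The paper likewise notes that $b_i^K=(b_i^{K/k_i})^{k_i}$, so avoiding all sums $a_1^{k_1}+\dots+a_s^{k_s}$ forces $(A-A)\cap\mathcal A_{K,s}=\emptyset$; it then applies Theorem \ref{thm:Sarkozyk} with $k=K$, which is odd and $>1$, with $s\le K$ even, exactly as you check.
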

In \cite[Theorems 1.2 and 5.7]{Rice19}, Rice shows that for any intersective polynomials $h_1, \dots, h_j \in \Z[T]$, we have $|A| \ll N \exp(-c(\log N)^\theta)$ for any set $A \subset [N]$ for which $A-A$ avoids values of the form $\sum_{j=1}^s \, h_j(n_j)$ for all $n_1, \dots, n_j \in \NatNos$; here $c>0$ and $\theta \in (0, 1)$ are fixed. Theorem \ref{thm:SarkozykDiff} thus strengthens Rice's results for $h_j(T) = T^{k_j}$, with $s>0$ even and all $k_j$ odd.

Let us sketch the proof of Theorem \ref{thm:Sarkozyk}. Fix any real $c_1, \dots, c_s \ge 0$ such that $c_1+\dots+c_s = k-s$; as remarked at the end of section \ref{sec:MainThmCompletion}, our arguments go through and yield the same results for any choice of $c_1, \dots, c_s \ge 0$ for which $c_1+\dots+c_s \le k-s$, but having exact equality makes the computations cleaner. 
Define for any non-negative integer $n$, the weighted count 
\begin{equation}\label{eq:rksDef} 
r_{k, s}(n) \, = r_{k, s}(n; c_1, \dots, c_s) \, = \, \sum_{\substack{a_1, \dots, a_s \ge 1\\\ a_1^k+ \dots + a_s^k=n}} \, a_1^{c_1} \cdots a_s^{c_s};
\end{equation}
in particular, $r_{k, s}(0) = 0$. Consider the weight function $\mu_N: \Z \rightarrow \reals_{\ge 0}$ given by 
\begin{equation}\label{eq:muNDef}
\mu_N(d) \coloneqq r_{k, s}(|d|) \, e^{-|d|/N}.
\end{equation}
Even though $\mu_N$ may be non-zero outside $(-N, N)$, we still have $\mu_N(d)=0$ if $|d|$ cannot be written in the form $a_1^k+ \dots + a_s^k$ for any $a_1, \dots, a_s \in \NatNos$. 
Hence, if $(A-A) \cap \Aks = \emptyset$, then 
\begin{equation}\label{eq:nukNPlancherel}
\begin{split}
0 \, &= \, \sum_{m, \, n \,  \in \, A} \, \mu_N(m-n) \, = \sum_{\substack{m, \, n \,  \in \, A\\d \in \Z}} \, \mu_N(d) \, \int_0^1 \, e(\alpha (n-m+d)) \, d\alpha \,\\
&= \, \int_0^1 \, \left(\sum_{n \in A} \, e(\alpha n) \right) \, \left(\sum_{m \in A} \, e(-\alpha m) \right) \, \left(\sum_{d \in \Z} \, \mu_N(d) \, e(\alpha d) \right) \, d\alpha \, = \, \int_0^1 \, |\OneAHatalpha|^2 \, \muNhatalpha \, d\alpha;
\end{split}
\end{equation}
here we used the fact that $\sum_{d \in \Z} \, \mu_N(d)$ is absolutely convergent, since we can write 
\begin{equation}\label{eq:muNAbsConv}
\begin{split}
\sum_{d \ge 1} \, \mu_N(d) \, &= \, \sum_{d \ge 1}\, r_{k, s}(d) \, e^{-d/N} \,= \, \sum_{d \ge 1} \sum_{\substack{a_1, \dots, a_s \ge 1\\ a_1^k+ \dots + a_s^k=d}} \, a_1^{c_1} \cdots a_s^{c_s} \, e^{-d/N} \,\\ &= \, \sum_{\substack{a_1, \dots, a_s \ge 1}} \, a_1^{c_1} \cdots a_s^{c_s} \, \exp\left(-\frac{a_1^k+ \dots + a_s^k}N\right) \, = \, \prod_{j=1}^s \, \left(\sum_{a_j \ge 1} \, a_j^{c_j} \, \exp\left(-\frac{a_j^k}N\right)\right),
\end{split}
\end{equation}
and each factor in the last expression is easily seen to be absolutely convergent.

The main feature of the weight $\mu_N$ is revealed by its major arcs approximation. Writing $\alpha = a/q + \beta$ and $S_k(q,a)=\sum_{r\pmod{q}} e(ar^k/q)$ for $\alpha \in \reals$, $q \in \NatNos$ and $a \in \Z$, extensions of an estimate of Br\"{u}dern and Vaughan \cite{BV21} (see Lemmas \ref{lem:rkExpSum} and \ref{lem:MajorArcEstk} below) yield 
\begin{equation}\label{1.2}
\muNhatalpha =2 \,  \mathcal C_{k, s} \, \left(\frac{S_k(q,a)}{q}\right)^s 
\int_0^\infty \cos (2\pi \be w)\, e^{-w/N}\,dw\ +\ \text{error}
\end{equation}
for some constant $\mathcal C_{k, s}>0$. Both factors in the main term above  are non-negative: Indeed, as seen in the proof of Lemma \ref{lem:MajorArcEstk} below, we have
\begin{equation}\label{eq:CosineInt}
    \int_0^\infty \cos (2\pi \be w)\, e^{-w/N}\,dw = \frac N{1+(2\pi \beta N)^2} \, > \, 0,
\end{equation}
while the odd parity of $k$ implies that $S_k(q,a)$ is real-valued, since 
\begin{equation}\label{eq:SkqaReal}
\overline{S_k(q, a)} = \sum_{r \bmod q} \, e\left(-\frac{ar^k}q\right) = \sum_{r \bmod q} \, e\left(\frac{a(-r)^k}q\right) = \sum_{m \bmod q} \, e\left(\frac{am^k}q\right) = S_k(q, a), 
\end{equation}
which in turn implies that $S_k(q,a)^s \in \reals^+$ for even $s>0$.

As a consequence, the contribution of every major arc to the integral $\int_0^1 \, |\OneAHatalpha|^2 \, \muNhatalpha \,d\alpha$  
is not too negative; in fact, the total contribution of the major arcs is $O(|A| \, N^{1-1/(2k)+\eps})$.  Moreover, the  
major arc around $0$ contributes $\gg |A|^2$. 
Extending the classical Weyl estimate \eqref{1.1} (as in Lemma \ref{lem:SmoothWeyl} and Corollary \ref{cor:MinorArcBoundk}), we see that the contributions of the minor arcs 
is $O(|A| \, N^{1-s\, \si_k/k+\eps})$. 
Combining all these estimates with \eqref{eq:nukNPlancherel}, we obtain Theorem \ref{thm:Sarkozyk}. 

The positivity of the main term of \eqref{1.2} is the main reason the  
case of odd $k$ admits a relatively direct argument. For a single odd polynomial, the analogous complete sum is real but may be negative, and previous positive-exponential-sum constructions of Nin\v{c}evi\v{c}–Slijep\v{c}evi\v{c} \cite{nin} average over several dilations to overcome this problem. Fortunately, for sums of an even number of equal odd powers, the complete sum $S_k(q,a)$ is automatically raised to an even power, thus yielding the desired positivity. By contrast, for sums of two squares,  the quadratic complete sums may be negative or complex; the recent work of Fan and Lott \cite{fanlott} resolves this by averaging over dilations in the style of Nin\v{c}evi\v{c}–Slijep\v{c}evi\v{c} and then carefully choosing smooth weights which force the relevant complete sum and oscillatory integral to lie in a fixed sector, thereby yielding the positivity of the real part of their product. 

Theorem \ref{thm:Sarkozyk} requires the restriction $s \le k$ (which is reflected in all subsequent theorems as well). This restriction is imposed  mainly so that there exist non-negative $c_j$ satisfying $c_1+\dots+c_s \le k-s$. It should be possible to adapt our argument 
to $s \in (k, 2k)$, by allowing some of the $c_j$ to be negative, but any attempt at doing so would require  careful 
bookkeeping of several error terms, negative exponents and endpoint corrections in various integrals arising in our arguments. We omit the technical details, but we remark that a much simpler version of our argument can be used to provide the following $s$-independent power saving        
\begin{equation}\label{eq:UnifsSaving}
|A| \, \ll_{k, \, s, \, \epsilon} \, N^{1-\sigma_k/k+\epsilon},    
\end{equation}
for  
\textit{any} even $s>0$ and any $A \subset [N]$ satisfying $(A-A) \cap \Aks = \emptyset$. 
We discuss this in section \ref{sec:UnifsSavingProof}. 
It is worth noting that in the special case $k=3$, 
we can readily get a saving $|A| \, \ll \, N^{11/12+\epsilon}$  
\textit{without} a minor arc estimate, but only by carrying out a  
simple Farey dissection of the base interval all the way up to $N^{1/2}$. This phenomenon typically occurs in quadratic settings, hence it might be surprising that it works for sums of (an even number of) cubes as well. 

Our approach is equivalent to the quantitative van der Corput method. This idea goes back to Kamae-Mend\'{e}s France \cite{Kamae} and Ruzsa \cite{Ruzsa1984}; see also Montgomery \cite{mont94} and the general framework of Matolcsi and Ruzsa \cite{materuzsa} which is then applied to cubic residues in \cite{mat21}. A recent breakthrough due to Green \cite{green} applies this approach to obtain a power saving bound for sets whose difference set does not contain a shifted prime $p-1$. Although we formulate the proof directly, our argument in fact constructs a power-saving van der Corput witness function for the set $\mathcal A_{k, s}$. More precisely, with  
$\mathcal A_{k, s}(N) \coloneqq \mathcal A_{k, s} \cap [N]$, our proof produces non-negative coefficients $\lambda_d$, supported on $\mathcal A_{k, s}(N)$, satisfying 
\[
\sum_{d\in \mathcal A_{k, s}(N)} \lambda_d=1\ \ \text{and}\ \ 
\Ree\sum_{d\in \mathcal A_{k, s}(N)} \lambda_d\,e(d\al) \, \geq \, -C_{k, s, \eps} \, N^{-\tau_k+\eps}\quad(\al\in \mathbb{T}).
\]
Theorem \ref{thm:Sarkozyk} then follows by a short, standard Fourier analytic argument.

An alternative approach  
is to note that if $A \subset [N]$ satisfies $(A-A) \cap \Aks = \emptyset$, then $|A| \le E_{k, s}(N) \, + \, 1$, where $E_{k, s}(N)$ is the number of positive integers $n \le N$ which cannot be written as a sum of exactly $s$-many positive $k$-th powers. For large $s$, this approach seems more promising.  For instance, for $s \, \gg_k \, 1$, this automatically yields  $|A| \ll 1$. However, the problem of finding uniform bounds on $E_{k, s}(N)$ is open for general $k, s$. In the special case $k=3$, it is known that 
$$E_{3, s}(N) \, \ll \, \begin{cases}
    N^{0.89}, &\text{ if }s=4, \text{ (see \cite{KW10, W15})}\\
    N^{0.43}, &\text{ if }s=6, \text{ (see \cite{KW10, W15})}\\
    1, &\text{ for even }s \ge 8, \text{ (see \cite{Lin43, MC84})}\\
\end{cases}$$
which yields the same respective bounds on $|A|$ in the case $k=3$. 

`We remark on the situation for $s>k$ in Theorem \ref{thm:Sarkozyk}.  
\subsection*{Notation and conventions} In the rest of the manuscript $[a, b]$ just denotes the closed interval, with left endpoint $a$ and right endpoint $b$. Implied constants in $\ll$ and $O$-notation, 
and implicit constants in qualifiers like ``sufficiently large'', 
may depend on any parameters declared as ``fixed''. In particular, all implied constants  are always allowed to depend on the fixed parameters $k, s$,  $\epsilon$, and $c_1, \dots, c_s$. 
\section{The key major and minor arc estimates}
The first input in our arguments will be the following restated version of Theorem 4.1 in \cite{Vau81}.  
\begin{thm}\label{thm:WeylkMajorArcs}
Fix any $k \in \NatNos_{>1}$ and $\epsilon>0$, and define $S_k(q, a) \, \coloneqq \, \sum_{r \bmod q} \, e(ar^k/q)$. Then 
\begin{align}\allowdisplaybreaks
\sum_{n \le X}\, e\left(\alpha n^k\right) \, &= \, \frac{S_k(q, a)}q \int_0^X \, e(\beta t^k) \, dt + \, O(q^{1/2+\epsilon}(1+X^k|\beta|)^{1/2}) \label{eq:WeylkMajorArcsPreSub}\\
&= \, \frac{S_k(q, a)}{kq} \int_0^{X^k} \, \frac{e(\beta u)}{u^{1-1/k}} \, du \, + \, O(q^{1/2+\epsilon}(1+X^k|\beta|)^{1/2}), \label{eq:WeylkMajorArcsPostSub}
\end{align} 
uniformly in $X \ge 1$, in $q \in \NatNos$, in $a \in \Z$, and in $\alpha \in \reals$, where $\beta \coloneqq \alpha - a/q$.
\end{thm}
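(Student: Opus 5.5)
The first equality \eqref{eq:WeylkMajorArcsPreSub} is essentially Theorem 4.1 of \cite{Vau81}. The second equality \eqref{eq:WeylkMajorArcsPostSub} is a change of variables. So the plan is to reconcile the hypotheses of Vaughan's theorem with the slightly more general formulation here, namely arbitrary $a\in\Z$ and real $X\ge 1$, and then substitute.

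\emph{Step 1 (reduced fractions).} Vaughan's Theorem 4.1 states that for $\gcd(a,q)=1$, $\beta=\alpha-a/q$ and any $N\in\NatNos$,
\[
\sum_{n\le N} e(\alpha n^k)=\frac{S_k(q,a)}{q}\int_0^N e(\beta t^k)\,dt+O\bigl(q^{1/2+\epsilon}(1+N^k|\beta|)^{1/2}\bigr),
\]
uniformly in all parameters. If $\gcd(a,q)=d>1$, write $a=da'$ and $q=dq'$ with $\gcd(a',q')=1$. Then $a/q=a'/q'$, so $\beta$ is unchanged. Since $r\mapsto e(a'r^k/q')$ has period $q'$, we get $S_k(q,a)=d\,S_k(q',a')$, hence $S_k(q,a)/q=S_k(q',a')/q'$. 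Applying Vaughan's theorem with $(q',a')$ gives an error $O(q'^{1/2+\epsilon}(1+N^k|\beta|)^{1/2})$, which is at most the claimed error since $q'\le q$. For the degenerate case we may take $a$ modulo $q$ freely; the case $q=1$ is included.

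\emph{Step 2 (real $X$).} For real $X\ge1$, put $N=\lfloor X\rfloor\ge1$. The sum over $n\le X$ equals the sum over $n\le N$. The two integrals differ by $\int_N^X e(\beta t^k)\,dt$, which has absolute value at most $X-N<1$. Since $|S_k(q,a)/q|\le 1$, this discrepancy is $O(1)$, which is absorbed into $O(q^{1/2+\epsilon}(1+X^k|\beta|)^{1/2})$. We also have $N^k\le X^k$, so Vaughan's error term is bounded by the stated one. This proves \eqref{eq:WeylkMajorArcsPreSub}.

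\emph{Step 3 (substitution).} In $\int_0^X e(\beta t^k)\,dt$ substitute $u=t^k$, so $t=u^{1/k}$ and $dt=\tfrac1k u^{1/k-1}\,du$. This gives $\frac1k\int_0^{X^k} e(\beta u)\,u^{-(1-1/k)}\,du$. The integral converges at $u=0$ because $1-1/k<1$. This yields \eqref{eq:WeylkMajorArcsPostSub} with the same error term.

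The only step needing care is the reduction to Vaughan's exact hypotheses: coprimality and integer length. Both are handled by the elementary observations above, and nothing beyond citing \cite[Theorem 4.1]{Vau81} is needed.
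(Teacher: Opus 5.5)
Your proposal is correct and matches the paper's treatment. The paper likewise cites Vaughan's Theorem 4.1, removes the coprimality hypothesis via $S_k(q,a)=d\,S_k(q/d,a/d)$ with $q/d\le q$, and obtains the second form by the substitution $u=t^k$; your step passing from $N=\lfloor X\rfloor$ to real $X\ge 1$ (an $O(1)$ discrepancy absorbed into the error term) is a detail the paper leaves implicit, and you handle it correctly.
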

Note that \eqref{eq:WeylkMajorArcsPostSub} follows from \eqref{eq:WeylkMajorArcsPreSub} by taking $u \coloneqq t^k$. Moreover, even though the above result is typically stated with the condition $\gcd(a, q)=1$, it still holds without this condition because if $d \coloneqq \gcd(a, q)$, then $S_k(q, a) = dS_k(q/d, a/d)$ and $|q/d| \le |q|$.  

We first establish the following generalization of \cite[Lemma 2.5]{BV21}, which gives an explicit estimate on the exponential sum $\sum_{n \le T} \, r_{k, s}(n) \, \enalpha$, uniformly in all real phases $\alpha$. Note that the case $s=2$, $k=3$ and $c_1 = \dots = c_s = 0$ of the result below reduces to \cite[Lemma 2.5]{BV21}.
\begin{lem}\label{lem:rkExpSum} 
Fix any integer $k>1$, any $s \in \NatNos$, and any $\epsilon > 0$. 
Let $c_1, \dots, c_s \ge 0$ be constants such that $C \coloneqq c_1+\dots+c_s \, \le \, k-s$. Define $r_{k, s}(n)$ as in \eqref{eq:rksDef}, and set 
\begin{equation}\label{eq:CksDef}
\mathcal C_{k, s} \coloneqq \frac{\prod_{j=1}^s\Gamma((1+c_j)/k)}{k^s \, \Gamma((s+C)/k)}>0.
\end{equation}  
Then uniformly in all real $\alpha$ and $T \ge 1$, as well as in all integers $q \ge 1$ and $a$, we have  
\begin{equation}\label{eq:rkExpSum}
\sum_{n \le T} r_{k, s}(n) e(\alpha n) \, = \, \mathcal C_{k, s}\SkqaqPower \, \int_0^T \,  \frac{e(\beta w)}{w^{1-(s+C)/k}} \, dw \,  + \, O(T^{(s-1+C)/k} \, q^{1/2+\epsilon} \, (1+T|\beta|)^{1/2}),
\end{equation} 
where $\beta \coloneqq \alpha - a/q$. The implied constants are allowed to depend at most on $k, s, \epsilon, c_1, \dots, c_s$.  
\end{lem}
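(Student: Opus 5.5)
We prove this by induction on $s$, treating $r_{k,s}$ as an $s$-fold additive convolution of the one-variable weights $a\mapsto a^{c_j}\mathbbm{1}_{a^k=n}$, in the spirit of \cite[Lemma 2.5]{BV21}. The base input is a weighted one-variable version of Theorem \ref{thm:WeylkMajorArcs}. For $c\ge 0$ and $X\ge1$, partial summation applied to \eqref{eq:WeylkMajorArcsPreSub} gives
\[
\sum_{m\le X} m^{c} e(\alpha m^k) = \frac{S_k(q,a)}{q}\int_0^X t^{c}e(\beta t^k)\,dt + O\bigl(X^{c}q^{1/2+\epsilon}(1+X^k|\beta|)^{1/2}\bigr).
\]
Indeed, write the sum as $X^cF(X)-\int_1^X c\,t^{c-1}F(t)\,dt$ with $F(t)=\sum_{m\le t}e(\alpha m^k)$. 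The error in $F(t)$ is monotone in $t$, so integrating it against $c t^{c-1}$ costs only $X^c$ times the endpoint error. Substituting $u=t^k$ turns the main term into $\frac{S_k(q,a)}{kq}\int_0^{X^k}u^{(1+c)/k-1}e(\beta u)\,du$. This settles the case $s=1$ with $\mathcal C_{k,1}=1/k$, since the ratio of Gamma values is $1$ and $C=c_1$.

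For the inductive step, write $r_{k,s}(n)=\sum_{m^k\le n} m^{c_s}\, r_{k,s-1}(n-m^k)$ with $r_{k,s-1}$ built from $c_1,\dots,c_{s-1}$. Then
\[
\sum_{n\le T} r_{k,s}(n)e(\alpha n)=\sum_{m\le T^{1/k}} m^{c_s}e(\alpha m^k)\sum_{n'\le T-m^k} r_{k,s-1}(n')e(\alpha n').
\]
Apply the induction hypothesis to the inner sum with $T'=T-m^k$, which requires a separate (trivial) treatment when $T'<1$. This produces a main term
\[
\mathcal C_{k,s-1}\Bigl(\frac{S_k(q,a)}{q}\Bigr)^{s-1}\sum_{m} m^{c_s}e(\alpha m^k)\,I_{s-1}(T-m^k),
\]
where $I_j(Y)=\int_0^Y e(\beta w)w^{(j+C_j)/k-1}\,dw$ and $C_j$ is the partial sum of the $c$'s, plus an error. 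The error is summed trivially over $m\le T^{1/k}$ with weight $m^{c_s}$. It gives $T^{(c_s+1)/k}\cdot T^{(s-2+C_{s-1})/k}q^{1/2+\epsilon}(1+T|\beta|)^{1/2}$, which is exactly the claimed size $T^{(s-1+C)/k}$. The main term is then handled by writing $\sum_m m^{c_s}e(\alpha m^k)\,G(T-m^k)$, with $G=I_{s-1}$, as a Stieltjes integral against $d\bigl(\sum_{m\le t^{1/k}}m^{c_s}e(\alpha m^k)\bigr)$. We integrate by parts so that $G'(y)=e(\beta y)y^{(s-1+C_{s-1})/k-1}$ appears, and then insert the weighted one-variable asymptotic above. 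The main part collapses to the convolution
\[
\frac{S_k(q,a)}{kq}\int\!\!\int_{u+v\le T} e(\beta(u+v))\,u^{(1+c_s)/k-1}v^{(s-1+C_{s-1})/k-1}\,du\,dv .
\]
The Beta integral $\int_0^1 x^{p-1}(1-x)^{r-1}\,dx$ then yields $\int_0^T e(\beta w)w^{(s+C)/k-1}\,dw$ times $B\bigl((1+c_s)/k,(s-1+C_{s-1})/k\bigr)$. Combined with $\mathcal C_{k,s-1}/k$, this gives exactly $\mathcal C_{k,s}$.

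The main obstacle is controlling the error generated in this integration by parts. The one-variable error $O\bigl(t^{c_s/k}q^{1/2+\epsilon}(1+t|\beta|)^{1/2}\bigr)$, with $t=m^k$, must be integrated against $|G'(T-t)|=(T-t)^{(s-1+C_{s-1})/k-1}$ over $[0,T]$. We need this integral to be at most $T^{(s-1+C)/k}q^{1/2+\epsilon}(1+T|\beta|)^{1/2}$. Pulling out the monotone factors leaves $T^{c_s/k}\int_0^T (T-t)^{(s-1+C_{s-1})/k-1}\,dt\asymp T^{(s-1+C)/k}$. This is fine because the exponent $(s-1+C_{s-1})/k>0$, so the integral converges at $t=T$. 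The hypothesis $C\le k-s$ guarantees that all exponents $(j+C_j)/k\le1$, so $G$ and the weights behave monotonically as needed. Boundary terms at $t=0$ and $t=T$ are of no larger order. Care is also needed with the range where $T-m^k<1$; there the inner sum is bounded by $T^{(s-1+C_{s-1})/k}$, so it is absorbed into the error. Uniformity in $q,a$ (non-coprime pairs included) is inherited from Theorem \ref{thm:WeylkMajorArcs}.
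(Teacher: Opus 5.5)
Your proposal is correct and follows essentially the same route as the paper: induction on $s$ by peeling off one variable, the inductive hypothesis applied to the inner sum with its error summed trivially, and a base case from partial summation on Theorem \ref{thm:WeylkMajorArcs}; your Stieltjes integration by parts is exactly the paper's interchange of the $a_{s+1}$-sum with the $w$-integral, followed by the one-variable asymptotic and the Beta integral. Two minor remarks: the error estimate only uses positivity of the exponents $(j+C_j)/k$, not the hypothesis $C\le k-s$, and your separate treatment of the range $T-m^k<1$ makes explicit a step the paper leaves implicit.
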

\begin{proof}
We will establish the lemma by inducting on $s$. The base case $s=1$ is an immediate consequence of the following general observation: For any constant $c \ge 0$ and for all $X \ge 0$, 
\begin{equation}\label{eq:rksExpSumBaseCase}
\sum_{m \le X} \, m^c \, e(\alpha m^k) \, = \, \frac{S_k(q, a)}{k q} \, \int_0^{X^k} \, \frac{e(\beta u)}{u^{1-(1+c)/k}} \, du \, + \, O_{k, c, \epsilon}(X^c \, q^{1/2+\epsilon} \, (1+X^k|\beta|)^{1/2}).   
\end{equation}
Indeed, by partial summation and Theorem \ref{thm:WeylkMajorArcs}, we see that 
\begin{align}
\sum_{m \le X} \, m^c \, e(\alpha m^k) \, &= \, X^c \sum_{m \le X} e(\alpha m^k) \, - \, \int_0^X \, \left(\sum_{m \le t} \, e(\alpha m^k) \right) \cdot c \, t^{c-1} \, dt \nonumber\\
&= \begin{multlined}[t] \label{eq:BaseCaseIntermed}
    \frac{S_k(q, a)}{kq} \left\{X^c \int_0^{X^k} \, \frac{e(\beta u)}{u^{1-1/k}} \, du \, - \, \int_0^X c \, t^{c-1} \, \int_0^{t^k} \, \frac{e(\beta u)}{u^{1-1/k}} \, du \, dt \right\}\\
    + \, O( X^c \, q^{1/2+\epsilon} \, (1+X^k|\beta|)^{1/2}), 
\end{multlined}
\end{align}
where we have noted that $\int_0^X \, t^{c-1} \cdot (1+t^k|\beta|)^{1/2} \, dt \, \le \, (1+X^k|\beta|)^{1/2}  \, \int_0^X \, t^{c-1} \, dt$. Now interchanging integrals, we see that the double integral in the main term of \eqref{eq:BaseCaseIntermed} equals 
$$\int_0^{X^k} \frac{e(\beta u)}{u^{1-1/k}} \, \int_{u^{1/k}}^X \, c \, t^{c-1}  \, dt \, du \, = \, X^c \, \int_0^{X^k} \frac{e(\beta u)}{u^{1-1/k}} \, du \, - \, \int_0^{X^k} \frac{e(\beta u)}{u^{1-(1+c)/k}} \, du.$$
Inserting this into \eqref{eq:BaseCaseIntermed} yields \eqref{eq:rksExpSumBaseCase}, establishing the base case $s=1$ of the lemma. 

Now assume the result is true for some $s \in \NatNos$; we will show that it also holds for $s+1$ in place of $s$. To this end, we start by writing   
\begin{align}\allowdisplaybreaks
\sum_{n \le T} r_{k, s+1}(n) \, e(\alpha n) \, &= \sum_{\substack{a_1, \dots, a_{s+1} \ge 1 \\ a_1^k+ \dots + a_{s+1}^k \le T}} \, a_1^{c_1} \cdots a_{s+1}^{c_{s+1}} \, e(\alpha (a_1^k+ \dots + a_{s+1}^k))\nonumber\\
&= \, \sum_{1 \le a_{s+1} \le T^{1/k}} \, a_{s+1}^{c_{s+1}} \, e(\alpha a_{s+1}^k) \, \sum_{\substack{a_1, \dots, a_{s} \ge 1\\a_1^k + \dots + a_{s}^k \le T-a_{s+1}^k}} \, a_{1}^{c_{1}} \, \cdots a_s^{c_s} \, e(\alpha (a_1^k + \dots + a_{s}^k)) \, \nonumber\\ &= \, \sum_{1 \le a_{s+1} \le T^{1/k}} \, a_{s+1}^{c_{s+1}}\, e(\alpha a_{s+1}^k) \, \sum_{m \le T-a_{s+1}^k} \, r_{k, s}(m)  \, e(\alpha m). \label{eq:rskInduct}
\end{align}
Applying the inductive hypothesis, we obtain 
\begin{multline*}
\sum_{n \le T} r_{k, s+1}(n) \, e(\alpha n) \, = \mathcal C_{k, s}\SkqaqPower \,  \sum_{1 \le a_{s+1} \le T^{1/k}} \, a_{s+1}^{c_{s+1}}\, e(\alpha a_{s+1}^k) \int_0^{T-a_{s+1}^k} \,  \frac{e(\beta w)}{w^{1-(s+C')/k}} \, dw \,\\  + \, O\left(T^{(s-1+C')/k} \, q^{1/2+\epsilon} \, (1+T|\beta|)^{1/2} \, \sum_{1 \le a_{s+1} \le T^{1/k}} \, a_{s+1}^{c_{s+1}}\right), 
\end{multline*}
where $C' \, \coloneqq \, c_1+\dots+c_s$, and where  
$$\mathcal C_{k, s} \coloneqq \frac{\prod_{j=1}^s\Gamma((1+c_j)/k)}{k^s \, \Gamma((s+C')/k)}.$$Interchanging the sum and the integral in the main term of the last estimate, and noting that the sum in the error term is at most $T^{c_{s+1}/k} \, \sum_{1 \le a_{s+1} \le T^{1/k}} \, 1 \, \le \, T^{(1+c_{s+1})/k}$, we obtain 
\begin{multline*}
\sum_{n \le T} r_{k, s+1}(n) \, e(\alpha n) \, = \mathcal C_{k, s}\SkqaqPower \,  \int_0^T \,  \frac{e(\beta w)}{w^{1-(s+C')/k}} \left(\sum_{1 \le a_{s+1} \le (T-w)^{1/k}} \, a_{s+1}^{c_{s+1}}\, e(\alpha a_{s+1}^k) \right) \, dw \,\\  + \, O\left(T^{(s+C)/k} \, q^{1/2+\epsilon} \, (1+T|\beta|)^{1/2}\right), 
\end{multline*}
where $C \coloneqq C'+c_{s+1} = c_1+\dots+ c_{s+1}$. We invoke \eqref{eq:rksExpSumBaseCase} to estimate the inner sum, and note that $ T^{c_{s+1}/k} \, q^{1/2+\epsilon} \, (1+T|\beta|)^{1/2} \, \int_0^T \, w^{(s+C')/k-1}\, dw \, \le \, T^{(s+C)/k} \, q^{1/2+\epsilon} \, (1+T|\beta|)^{1/2}$. This yields
\begin{equation}\label{eq:rks+1Prelim}
\sum_{n \le T} r_{k, s+1}(n) \, e(\alpha n) \, = \, \frac{\mathcal C_{k, s}}k \,  \left(\frac{S_k(q, a)}q\right)^{s+1}  
I_{k, s+1}(\beta, T)\\
+ O(T^{(s+C)/k} \, q^{1/2+\epsilon} \, (1+T|\beta|)^{1/2}),
\end{equation}
with $I_{k, s+1}(\beta, T) \coloneqq \int_0^T \, w^{(s+C')/k-1} \int_0^{T-w} \, {u^{(1+c_{s+1})/k - 1}} \, e(\beta(u+w)) \, du \, dw$. Setting $v \coloneqq u+w$ in the inner integral, then interchanging the resulting integral on $v$ with the outer integral on $w$, 
\begin{align}
I_{k, s+1}(\beta, T) \, &= \, \int_0^T \, e(\beta v) \, \int_0^v \, w^{(s+C')/k-1} \,  (v-w)^{(1+c_{s+1})/k - 1} dw \, dv \nonumber\\
&= \int_0^T \, e(\beta v) \, v^{(s+1+C)/k-1} \, dv \, \int_0^1 \,  z^{(s+C')/k-1} \,  (1-z)^{(1+c_{s+1})/k - 1} dz \\
&= \frac{\Gamma((s+C')/k) \, \Gamma((1+c_{s+1})/k)}{\Gamma((s+1+C)/k)} \, \int_0^T \, \frac{e(\beta v)}{v^{1-(s+1+C)/k}} \, dv \label{eq:Iks+1},
\end{align}
where we have set $w = vz$ and then used the standard identity relating the beta function with Gamma functions. Finally, inserting \eqref{eq:Iks+1} into \eqref{eq:rks+1Prelim}, and noting that 
$$\frac{\mathcal C_{k, s}}k \cdot \frac{\Gamma((s+C')/k) \, \Gamma((1+c_{s+1})/k)}{\Gamma((s+1+C)/k)} \, = \, \mathcal C_{k, s+1}$$ completes the induction step, and with it, the proof of Lemma \ref{lem:rkExpSum}. 
\end{proof}
In what follows, we write $A \ge O_+(B)$ (respectively, $A \ge -O_+(B)$) to mean that $A \ge \kappa B$ (respectively, $A \ge -\kappa B$) for some constant $\kappa>0$ depending at most on $k, s, c_1, \dots, c_s$ and $\epsilon$. 
We now employ Lemma \ref{lem:rkExpSum} to  obtain our main workhorse estimates for the major arcs.  
\begin{lem}\label{lem:MajorArcEstk}
Fix any odd integer $k>1$, any even integer $s>0$, and any $\epsilon>0$. Let $c_1, \dots, c_s \ge 0$ be constants such that 
$c_1+\dots+c_s \, = \, k-s$. Define $r_{k, s}(n)$ and $\mu_N(d)$ as in \eqref{eq:rksDef} and \eqref{eq:muNDef}. Then uniformly in all real $\alpha$, and in all integers $N \ge 1$, $q \ge 1$, and $a$, we have 
\begin{equation}\label{eq:MajorArcEstk}
\muNhatalpha \, = \, 2 \, \mathcal C_{k, s} \SkqaqPower \, 
\, \frac N{1+(2\pi \beta N)^2} \, + \, O(N^{1-1/k} \, q^{1/2+\epsilon} \, (1+N|\beta|)^{1/2})
\end{equation} 
where $\beta \coloneqq \alpha - a/q$. 
As a consequence, we have uniformly in all such $\beta, N, q, a$, 
\begin{equation}\label{eq:MajorArcBoundk}
 \muNhatalpha \, \ge \, - \, O_+(N^{1-1/k} \, q^{1/2+\epsilon} \, (1+N|\beta|)^{1/2}).   
\end{equation}
\end{lem}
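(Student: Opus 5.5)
We will compute $\muNhatalpha$ by partial summation from Lemma \ref{lem:rkExpSum}, applied with $C = c_1+\dots+c_s = k-s$, so that $(s+C)/k = 1$. Then the weight $w^{(s+C)/k-1}$ in \eqref{eq:rkExpSum} disappears, and the error term becomes $O(T^{1-1/k} q^{1/2+\epsilon}(1+T|\beta|)^{1/2})$.

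First we split $\muNhatalpha = \sum_{d\ge1} r_{k,s}(d)e^{-d/N}e(\alpha d) + \sum_{d\ge1} r_{k,s}(d)e^{-d/N}e(-\alpha d)$, using $\mu_N(0)=r_{k,s}(0)=0$. For the second sum we use the approximation $-a/q$ to $-\alpha$, with $\beta$ replaced by $-\beta$. Since $S_k(q,-a)=\overline{S_k(q,a)}=S_k(q,a)$ by \eqref{eq:SkqaReal}, the main-term factor $(S_k(q,a)/q)^s$ is the same in both sums.

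Next, write $R(T)=\sum_{n\le T}r_{k,s}(n)e(\alpha n)$. Partial summation (Abel/Stieltjes) gives
\[
\sum_{d\ge1} r_{k,s}(d)e(\alpha d)e^{-d/N}=\frac1N\int_0^\infty R(T)e^{-T/N}\,dT,
\]
where the boundary term vanishes because $R(T)$ grows polynomially. We insert \eqref{eq:rkExpSum}. In the main term we get $\mathcal C_{k,s}(S_k(q,a)/q)^s\,\frac1N\int_0^\infty e^{-T/N}\int_0^T e(\beta w)\,dw\,dT$. Interchanging the integrals turns the inner part into $\int_0^\infty e(\beta w)e^{-w/N}\,dw$. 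Adding the conjugate contribution from the $-\alpha$ sum gives $2\int_0^\infty\cos(2\pi\beta w)e^{-w/N}\,dw$. A direct computation (real part of $\int_0^\infty e^{-w(1/N-2\pi i\beta)}\,dw = 1/(1/N-2\pi i\beta)$) gives $N/(1+(2\pi\beta N)^2)$, which is \eqref{eq:CosineInt}. This produces the main term of \eqref{eq:MajorArcEstk}.

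The error term is the step needing care, though it is routine. We must bound
\[
\frac1N\int_0^\infty T^{1-1/k}q^{1/2+\epsilon}(1+T|\beta|)^{1/2}e^{-T/N}\,dT.
\]
Substituting $T=Nx$ gives $N^{1-1/k}q^{1/2+\epsilon}\int_0^\infty x^{1-1/k}(1+Nx|\beta|)^{1/2}e^{-x}\,dx$. We then use $(1+Nx|\beta|)^{1/2}\le (1+x)^{1/2}(1+N|\beta|)^{1/2}$, and the remaining integral $\int_0^\infty x^{1-1/k}(1+x)^{1/2}e^{-x}\,dx$ is a finite constant. For small $T<1$, where Lemma \ref{lem:rkExpSum} requires $T\ge1$, we simply have $R(T)=0$. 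This gives \eqref{eq:MajorArcEstk}.

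Finally, \eqref{eq:MajorArcBoundk} follows at once. The constant $\mathcal C_{k,s}>0$, and $(S_k(q,a)/q)^s\ge0$ because $S_k(q,a)$ is real and $s$ is even. The factor $N/(1+(2\pi\beta N)^2)$ is positive. So the main term is nonnegative, and dropping it leaves only the error term as a lower bound.
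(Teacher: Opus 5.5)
Your proposal is correct and follows essentially the paper's argument. Both use partial summation against $e^{-T/N}/N$ with Lemma~\ref{lem:rkExpSum} at $(s+C)/k=1$, Fubini to produce $\int_0^\infty e(\beta w)e^{-w/N}\,dw$, and nonnegativity of $\mathcal C_{k,s}(S_k(q,a)/q)^s N/(1+(2\pi\beta N)^2)$ for the lower bound. The differences are purely cosmetic: you treat $d<0$ via the pair $(-\alpha,-a)$ rather than taking $2\,\mathrm{Re}$, bound the error integral through $(1+Nx|\beta|)\le(1+x)(1+N|\beta|)$ instead of splitting at $x=1/(N|\beta|)$, and evaluate the cosine integral as $\mathrm{Re}\,(1/N-2\pi i\beta)^{-1}$ instead of integrating by parts twice.
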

\begin{proof}
We will first show that 
\begin{multline}\label{eq:MajorArcEstkPosHalf}
\sum_{n \ge 1} \, \mu_N(\alpha) \, e(\alpha n) \, = \, \mathcal C_{k, s} \SkqaqPower \, 
\int_0^\infty \, e(\beta w) \, e^{-w/N} \, dw \,   + \, O(N^{1-1/k} \, q^{1/2+\epsilon} \, (1+N|\beta|)^{1/2})
\end{multline}   
To this end, we use \eqref{eq:rkExpSum} to see that $e^{-t/N} \, \sum_{n \le t} \, r_{k, s}(n) \, e(\alpha n) \, \ll \, e^{-t/N} \, \big(q t (1+|\beta|) \big)^{O(1)}$ uniformly in $t \ge 1$. Hence, by partial summation and \eqref{eq:rkExpSum}, we obtain 
\begin{align}
\sum_{n \ge 1} \, \mu_N(n) \, e(\alpha n) \, &= \, \sum_{n \ge 1} \, r_{k, s}(n) \, e(\alpha n) \cdot e^{-n/N} \,  = \, \int_0^\infty \, \left(\sum_{n \le t} \, r_{k, s}(n) \, e(\alpha n)\right) \cdot \frac{e^{-t/N}}N \, dt \label{eq:FTPosHalfStep1}\\
&= \, 
\begin{multlined}[t] \label{eq:FTPosHalf}
   \mathcal C_{k, s}\SkqaqPower \, \int_0^\infty \, \int_0^t \,  e(\beta w) \, dw \cdot \frac{e^{-t/N}}N \, dt  \,\\ + \, O\left(\int_0^\infty \, t^{1-1/k} \, q^{1/2+\epsilon} \, (1+t|\beta|)^{1/2}  \cdot \frac{e^{-t/N}}N \, dt \right)
\end{multlined}
\end{align} 
Upon an interchange of integrals, the main term of \eqref{eq:FTPosHalf} equals
\begin{equation}\label{eq:FTPosHalfMainInt}
\begin{split}
 \mathcal C_{k, s}\SkqaqPower \, \int_0^\infty \, {e(\beta w)} \, \int_w^\infty \,  \frac{e^{-t/N}}N \, dt  \, dw \, = \, \mathcal C_{k, s}\SkqaqPower \, \int_0^\infty \, e(\beta w) \, e^{-w/N} \, dw.
\end{split}
\end{equation}
Next, taking $t \coloneqq Nu$, we see that the total error term in \eqref{eq:FTPosHalf} is
\begin{align} \allowdisplaybreaks
&\ll \, N^{1-1/k} \, q^{1/2+\epsilon} \int_0^\infty \, u^{1-1/k} \, (1+Nu|\beta|)^{1/2}  \, e^{-u} \, du, 
\label{eq:ErrPostSub}
\end{align}
which for $\beta \ne 0$ is   
\begin{align} \allowdisplaybreaks
&\ll \, N^{1-1/k} \, q^{1/2+\epsilon} \left\{\int_0^{1/(N|\beta|)} \, u^{1-1/k}  \, e^{-u} \, du \, + \, (N|\beta|)^{1/2} \, \int_{1/(N|\beta|)}^\infty \, u^{1-1/k+1/2}  \, e^{-u} \, du \right\} \nonumber\\
&\le \, N^{1-1/k} \, q^{1/2+\epsilon} \left\{\Gamma\left(2-\frac1k\right) \, + \, (N|\beta|)^{1/2} \, \Gamma\left(\frac52-\frac1k\right) \right\} \label{eq:FTPosHalfErrIntermed}\\
&\ll \, N^{1-1/k} \, q^{1/2+\epsilon} \, (1+(N|\beta|)^{1/2}) \, \ll  N^{1-1/k} \, q^{1/2+\epsilon} \, (1+N|\beta|)^{1/2}; 
\label{eq:FTPosHalfErr}
\end{align}
in the last line, we have used $(1+|\eta|)^{1/2} \, \asymp \, 1+|\eta|^{1/2}$ uniformly in all reals $\eta$, and observed that both the $\Gamma$-values in \eqref{eq:FTPosHalfErrIntermed} are  
$O_{k, s}(1)$ since $5/2-1/k > 2-1/k \ge 3/2$. Note that if on the other hand, we had $\beta = 0$, then the entire expression in \eqref{eq:ErrPostSub} is exactly  
$\Gamma(2-1/k) \, N^{1-1/k} \, q^{1/2+\epsilon}$, which is still (asymptotically) bounded by the expression in \eqref{eq:FTPosHalfErr}.   
Inserting \eqref{eq:FTPosHalfMainInt} and \eqref{eq:FTPosHalfErr} into \eqref{eq:FTPosHalf}, we obtain our desired estimate \eqref{eq:MajorArcEstkPosHalf}. 

Now since $\muNhatalpha \, = \, \sum_{n \in \Z} \, \mu_N(n) \, e(\alpha n) \, = \, 2 \, \Ree \sum_{n \ge 1} \, \mu_N(n) \, e(\alpha n)$, and since $S_k(q, a) \in \reals$, we see from \eqref{eq:MajorArcEstkPosHalf} that  
$$\muNhatalpha \, = \, 2 \, \mathcal C_{k, s} \SkqaqPower  \, \int_0^\infty \, \cos(2 \pi \beta w) \, e^{-w/N} \, dw \, + \, O(N^{1-1/k} \, q^{1/2+\epsilon} \, (1+N|\beta|)^{1/2}).$$
To establish \eqref{eq:MajorArcEstk}, it thus remains to show that the integral above is $N\big/({1+(2\pi \beta N)^2})$. Taking $w = Nu$ in the integral, this comes down to showing that 
\begin{equation*}
\text{ For any }\xi \in \reals, \text{ we have } \int_0^\infty \, \cos(\xi u) \, e^{-u} \, du \, = \, \frac1{1+\xi^2}. 
\end{equation*}
This follows easily from two applications of integration by parts, for we get  
\begin{align*}
\int_0^\infty \, \cos(\xi u) \, e^{-u} \, du \, = \, 1 - \xi\, \int_0^\infty \, \sin(\xi u) \, e^{-u} \, du \, = \, 1-\xi^2 \int_0^\infty \, \cos(\xi u) \, e^{-u} \, du.   
\end{align*}
This completes the proof of \eqref{eq:MajorArcEstk}. The second assertion \eqref{eq:MajorArcBoundk} follows immediately, since $S_k(q, a) \in \reals$ and $s$ is even, making $S_k(q, a)^s$ a nonnegative real number.  
\end{proof}
We conclude this section with our workhorse estimate for the minor arcs. Given $b \in \NatNos$, recall that $\sigma_b > 0$ was defined to be any constant such that for any fixed $\delta>0$, a Weyl estimate 
\begin{equation}\label{eq:Weyl}
\sum_{m \le X} \, e(\alpha m^b) \, \ll_{b, \delta} \, X^{1+\delta} \, \left(\frac1q \, + \, \frac1X \, + \frac q{X^b}\right)^{\sigma_b}  
\end{equation}
holds true for all $X \ge 0$,  $\alpha \in \reals$, $a \in \Z$ and $q \in \NatNos$  satisfying $\gcd(a, q)=1$ and $|\al-a/q|\leq q^{-2}$. As observed in the introduction, we have $\sigma_b \le 1/b$. 
\begin{lem}\label{lem:SmoothWeyl}
Fix any $b \in \NatNos$, any $c \ge 0$ and $\delta>0$. Uniformly in $\alpha \in \reals$ and $T \ge 1$, and uniformly in $a \in \Z$ and $q \in \NatNos$ satisfying $\gcd(a, q)=1$ and $|\alpha-a/q| \le 1/q^2$, 
we have 
\begin{equation}\label{eq:WeylTwokthPowers}
\sum_{m \ge 1} \, m^c \, e(\alpha m^b) \, \exp\left(-\frac{m^b}T \right) 
\, \ll_{b, \, s, \, c, \, \delta} \, T^{(1+c+\delta)/b} \, \left(\frac1q \, + \, \frac1{T^{1/b}} \, + \, \frac qT\right)^{\sigma_b}.
\end{equation}
\end{lem}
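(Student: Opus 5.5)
The plan is partial summation in $m$, converting the smooth weight $m^c\exp(-m^b/T)$ into an integral of the unweighted Weyl sums $F(X)\coloneqq\sum_{m\le X} e(\alpha m^b)$. These are controlled by \eqref{eq:Weyl} uniformly in $X$ for our fixed pair $(a,q)$.

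Set $\phi(x)\coloneqq x^c\exp(-x^b/T)$ for $x\ge 0$. This function is smooth on $(0,\infty)$, satisfies $\phi(x)\to 0$ rapidly as $x\to\infty$, and is bounded near $0$ since $c\ge 0$. Abel summation gives
\[
\sum_{m\ge1} m^c e(\alpha m^b)\exp(-m^b/T) \, = \, -\int_0^\infty F(X)\,\phi'(X)\,dX .
\]
There is no boundary term at infinity, because $F(X)\ll X$ while $\phi$ decays. Insert \eqref{eq:Weyl}, which holds for every $X\ge 0$ with the same $a,q$:
\[
\bigl|F(X)\bigr|\ll X^{1+\delta}\Bigl(\frac1q+\frac1X+\frac q{X^b}\Bigr)^{\sigma_b}.
\]

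The key observation is that for $X\ge T^{1/b}$ we have $\frac1X+\frac q{X^b}\le T^{-1/b}+\frac qT$. In that range the bracket is at most $\Delta\coloneqq \frac1q+T^{-1/b}+\frac qT$. We therefore split the integral at $X_0=T^{1/b}$.

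On $[X_0,\infty)$ the bound $|F(X)|\ll X^{1+\delta}\Delta^{\sigma_b}$ holds. Also $|\phi'(X)|\ll X^{c-1}(1+X^b/T)\exp(-X^b/T)$. Substituting $X=T^{1/b}y$ makes this tail at most $T^{(1+c+\delta)/b}\Delta^{\sigma_b}$ times a convergent Gamma-type integral.

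On $[0,X_0]$ the monotonicity trick fails, since $X^{-1}$ and $qX^{-b}$ grow as $X$ decreases. There I would use instead the trivial bound $|F(X)|\le X$ together with the Weyl bound, and exploit that the weight there is only $|\phi'(X)|\ll X^{c-1}$. For this range the cleanest approach is to bound the partial sum directly:
\[
\Bigl|\sum_{m\le X_0}\phi(m)e(\alpha m^b)\Bigr| \, \le \, \phi\text{-weighted partial summation over }[0,X_0],
\]
giving $\ll X_0^{c}\max_{X\le X_0}|F(X)|$ up to a constant, because $\phi$ has bounded variation $\ll X_0^c$ on $[0,X_0]$ ($\phi$ increases and then decreases, with $\max\phi\ll X_0^c$).

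The remaining task is to bound $\max_{X\le X_0}|F(X)|$ by $X_0^{1+\delta}\Delta^{\sigma_b}$. I expect this to be the main obstacle. For $X\le X_0$ I would take
\[
|F(X)| \le \min\Bigl\{X,\; X^{1+\delta}\Bigl(\tfrac1q+\tfrac1X+\tfrac q{X^b}\Bigr)^{\sigma_b}\Bigr\}.
\]
I then compare each term with $X_0^{1+\delta}\Delta^{\sigma_b}$, using $\sigma_b\le 1/b\le 1$.

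For the term $\frac1q$, the factor $X^{1+\delta}$ is increasing in $X$, so this term is fine.

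For the term $\frac1X$, we get $X^{1+\delta-\sigma_b}\le X_0^{1+\delta-\sigma_b}= X_0^{1+\delta}T^{-\sigma_b/b}$, which is also fine.

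For the term $\frac q{X^b}$, we get $X^{1+\delta-b\sigma_b}q^{\sigma_b}$. Since $b\sigma_b\le 1$, the exponent of $X$ is nonnegative, so this is at most $X_0^{1+\delta-b\sigma_b}q^{\sigma_b}=X_0^{1+\delta}(q/T)^{\sigma_b}$.

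Here the constraint $\sigma_b\le 1/b$ noted earlier is exactly what makes the argument work. Combining the two ranges gives \eqref{eq:WeylTwokthPowers}. The implied constant depends only on $b$, $c$ and $\delta$, and $\delta$ is absorbed by the notation.
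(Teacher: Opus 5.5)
Your argument is correct and is essentially the paper's: Abel summation against $\phi'$, then insertion of \eqref{eq:Weyl}, then the constraint $\sigma_b\le 1/b$ to keep the exponent $1+\delta-b\sigma_b$ positive, and finally a Gamma-type integral after rescaling $X=T^{1/b}y$. The only difference is bookkeeping. The paper does not split the range at $T^{1/b}$; instead it breaks the Weyl bound into the three monomials $q^{-\sigma_b}t^{1+\delta}$, $t^{1+\delta-\sigma_b}$, $q^{\sigma_b}t^{1+\delta-b\sigma_b}$ and shows $\int_0^\infty t^\rho|\phi'(t)|\,dt\ll T^{(\rho+c)/b}$ for each fixed $\rho>0$, which covers both your head and tail ranges at once.
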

\begin{proof}
We start by observing that for any $m \in \NatNos$, $\rho \in [0, 1]$, and $A_1, \dots, A_m > 0$, we have
\begin{equation}\label{eq:PowerDistribIneq}
\frac{A_1^\rho \, + \, \dots \, + \, A_m^\rho}m \, \le \, (A_1 \, +\, \dots \, + \, A_m)^\rho \, \le \, A_1^\rho \, + \, \dots \, + \, A_m^\rho. 
\end{equation}
The left inequality follows from $(A_1 \, +\, \dots \, + \, A_m)^\rho \, \ge \, \max\{A_1^\rho, \dots, A_m^\rho\}$. To see the inequality on the right, we note that for any $\rho \in (0, 1)$ and $t>0$, the function $h_\rho(t) \, \coloneqq \, (1+t^\rho)-(1+t)^\rho$ has derivative $h_\rho'(t)=\rho t^{\rho-1}\cdot \big(1-(t/(t+1))^{1-\rho}\big) \, > \, 0$, so that $h_\rho$ is strictly increasing on the positive real line. As such, $h_\rho(t) >  h_\rho(0) = 0$ for all $t>0$, yielding $(1+t)^\rho \, \le 1+t^\rho$ for all such $t$ and for all $\rho \in [0, 1]$. Taking $t \coloneqq A_1/A_2$, we obtain $(A_1+A_2)^\rho \, \le \, A_1^\rho \, + \, A_2^\rho$ for any $A_1, A_2 > 0$ and any $\rho \in [0, 1]$. Inducting on this establishes the second inequality in \eqref{eq:PowerDistribIneq}.

Now with $\psi_T(t) \coloneqq t^c \, \exp(-t^b/T)$, we apply partial summation as in \eqref{eq:FTPosHalfStep1} to obtain 
\begin{align}
 \sum_{m \ge 1} \, m^c \, e(\alpha m^b) \, \exp\left(-\frac{m^b}T \right) \, = \, \sum_{m \ge 1} \, e(\alpha m^b) \, \psi_T(m) \, = \, -\int_0^\infty \, \left(\sum_{m \le t} \, e(\alpha m^b)\right) \, \psi_T'(t) \, dt.   
\end{align}
Inserting \eqref{eq:Weyl} and invoking  \eqref{eq:PowerDistribIneq}, we find that
\begin{equation}\label{eq:PostWeylSplit}
\sum_{m \ge 1} \, m^c \, e(\alpha m^b) \, \exp\left(-\frac{m^b}T \right) \, \ll \, q^{-\sigma_b} \, J_{1+\delta}(T) \, + \, J_{1+\delta-\sigma_b}(T) \, + \, q^{\sigma_b} \, J_{1+\delta-b \, \sigma_b}(T),
\end{equation}
where $J_\rho(T) \coloneqq \int_0^\infty \, t^\rho \, |\psi_T'(t)| \, dt$, and where (in the entire argument) our implied constants depend at most on $b, s, c$, $\delta$ and the parameter $\rho$ below. We claim that for any fixed $\rho>0$, we have
\begin{equation}\label{eq:JrhoBound}
J_\rho(T) \, \ll \, T^{(\rho+c)/b} 
\end{equation}
uniformly in $T \ge 1$. Indeed, since $\psi_T'(t) \, = \, \exp(-t^b/T) \, t^{c-1} \, (c-b \, t^b/T)$, we see that  $0 < \psi_T'(t) \, \le \, c \, t^{c-1} \, \exp(-t^b/T)$ for $0 < t \le (Tc/b)^{1/b}$, whereas $\psi_T'(t)<0$ for $t>  (Tc/b)^{1/b}$. Hence
\begin{align*}\allowdisplaybreaks
J_\rho(T) \, \le \, c\int_0^{(Tc/b)^{1/b}} \, t^{\rho+c-1} \, \exp\left(-\frac{t^b}T\right) \, dt \, + \, \int_{(Tc/b)^{1/b}}^\infty \, t^\rho \, (-\psi_T'(t)) \, dt.    
\end{align*} 
Integrating the second integral by parts and noting that $\psi_T((Tc/b)^{1/b}) \, \asymp \, T^{c/b}$, we obtain 
\begin{align*}
J_\rho(T) \, &\ll \, \int_0^{(Tc/b)^{1/b}} \, t^{\rho+c-1} \, \exp\left(-\frac{t^b}T\right) \, dt \, + \, T^{(\rho+c)/b} \, + \, \int_{(Tc/b)^{1/b}}^\infty \, \rho \, t^{\rho-1} \, \psi_T(t) \, dt\\
&\ll \,  T^{(\rho+c)/b} \, + \, \int_0^\infty \, t^{\rho+c-1} \, \exp\left(-\frac{t^b}T\right) \, dt \, \ll \,  T^{(\rho+c)/b} \, + \, T^{(\rho+c)/b} \,  \int_0^\infty \, u^{(\rho+c)/b-1} \, e^{-u} \, du, 
\end{align*}
where in the last step above, we have set $u \coloneqq t^b/T$. Since the last integral in the above display is $\Gamma((\rho+c)/b) \, \ll \, 1$ (as $\rho>0$, $c \ge 0$), we obtain the claimed bound  \eqref{eq:JrhoBound}. 

Finally, applying \eqref{eq:JrhoBound} with $\rho \in \{1+\delta, \, 1+\delta - \sigma_b \, 1+\delta - b \sigma_b\}$ (which are all strictly positive as  $\sigma_b \le 1/b$), we deduce from \eqref{eq:PostWeylSplit} that 
$$\sum_{m \ge 1} \, m^c \, e(\alpha m^b) \, \exp\left(-\frac{m^b}T \right) \, \ll \, T^{(1+c+\delta)/b} \, \left(\frac1{q^{\sigma_b}} \, + \, \frac1{T^{\sigma_b/b}} \, + \, \left(\frac qT\right)^{\sigma_b}\right),$$
which by a final application of \eqref{eq:PowerDistribIneq}, completes the proof of Lemma \ref{lem:SmoothWeyl}. 
\end{proof}
The following consequence of Lemma \ref{lem:SmoothWeyl} is what we will need to bound the total contribution of the minor arcs; the amplification of the power saving is the key to our main results. 
\begin{cor}\label{cor:MinorArcBoundk}
Fix $c_1, \dots, c_s \ge 0$ 
such that $c_1+\dots+c_s = k-s$. For any fixed $\delta>0$, we have 
\begin{equation}\label{eq:MinorArcEstk} 
\muNhatalpha \, \ll \, 
N^{1+\delta/k} \, \left(\frac1q \, + \, \frac1{N^{1/k}} \, + \, \frac qN\right)^{s\, \sigma_k},
\end{equation}
uniformly in all $N \in \NatNos$, $\alpha \in \reals$, $q \in \NatNos$ and $a \in \Z$ satisfying $\gcd(a, q)=1$ and $|\alpha-a/q| \le 1/q^2$.
\end{cor}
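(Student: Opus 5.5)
The plan is to factor $\muNhatalpha$ into a product of one-variable smoothed Weyl sums, exactly as in \eqref{eq:muNAbsConv}, and then apply Lemma \ref{lem:SmoothWeyl} to each factor.

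Since $\mu_N(d)=r_{k,s}(|d|)e^{-|d|/N}$ and $r_{k,s}(0)=0$, we have
\[
\muNhatalpha = \sum_{n\ge1}\mu_N(n)e(\alpha n) + \sum_{n\ge 1}\mu_N(n)e(-\alpha n) = 2\,\Ree \sum_{n \ge 1} r_{k,s}(n)\,e^{-n/N}\,e(\alpha n).
\]
It therefore suffices to bound $F(\alpha) \coloneqq \sum_{n\ge1} r_{k,s}(n)e^{-n/N}e(\alpha n)$ in absolute value. Expanding $r_{k,s}$ by \eqref{eq:rksDef} and using absolute convergence (as in \eqref{eq:muNAbsConv}), the sum factors:
\[
F(\alpha)=\prod_{j=1}^s \Bigl(\sum_{a_j\ge1} a_j^{c_j}\, e(\alpha a_j^k)\exp\bigl(-a_j^k/N\bigr)\Bigr).
\]

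We apply Lemma \ref{lem:SmoothWeyl} with $b=k$, $c=c_j$, $T=N$ and $\delta$ replaced by $\delta/s$, using the same pair $(a,q)$ with $\gcd(a,q)=1$ and $|\alpha-a/q|\le q^{-2}$. Each factor is then
\[
\ll N^{(1+c_j+\delta/s)/k}\Bigl(\frac1q+\frac1{N^{1/k}}+\frac qN\Bigr)^{\sigma_k}.
\]
Multiplying the $s$ bounds gives the exponent of $N$ as
\[
\frac{s+\sum_j c_j+\delta}{k} = \frac{s+(k-s)+\delta}{k} = 1+\frac{\delta}{k},
\]
because $\sum_j c_j = k-s$. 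The bracket appears raised to $s\,\sigma_k$. This is precisely \eqref{eq:MinorArcEstk}, and the implied constants depend only on $k,s,c_j,\delta$.

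I expect no real obstacle here. The only points needing care are the justification of the factorization and the interchange of summations, both of which follow from absolute convergence as already shown in \eqref{eq:muNAbsConv}. Uniformity in $N\ge1$ is inherited from Lemma \ref{lem:SmoothWeyl}, which holds uniformly for $T\ge1$.
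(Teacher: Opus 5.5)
Your proposal is correct and matches the paper's proof step for step. You write $\muNhatalpha = 2\,\Ree\sum_{n\ge1}\mu_N(n)e(\alpha n)$, factor this sum into $s$ smoothed one-variable Weyl sums as in \eqref{eq:muNAbsConv}, and apply Lemma~\ref{lem:SmoothWeyl} to each factor with $\delta/s$ in place of $\delta$; the constraint $c_1+\dots+c_s=k-s$ then makes the total power of $N$ equal to $1+\delta/k$.
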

\begin{proof}
Since $\muNhatalpha \, = \, 2 \, \Ree \, \sum_{n \ge 1} \, \mu_N(n) \, e(\alpha n)$, it suffices to prove the claimed bound on this latter sum. Now proceeding exactly as in \eqref{eq:muNAbsConv}, we see that 
\begin{align}\allowdisplaybreaks
\sum_{n \ge 1} \, \mu_N(n) \, e(\alpha n) \, 
&= \, \sum_{\substack{a_1, \dots, a_s \ge 1}} \, a_1^{c_1} \cdots a_s^{c_s} \, e\big(\alpha (a_1^k+ \dots + a_s^k)\big) \, \exp\left(-\frac{a_1^k+ \dots + a_s^k}N\right)\\  
&= \, \prod_{j=1}^s \, \left(\sum_{a_j \ge 1} \, a_j^{c_j} \, e(\alpha a_j^k) \, \exp\left(-\frac{a_j^k}N\right)\right).  \label{eq:muNPosHalfProd}
\end{align}
The corollary now follows by bounding each of the $s$ factors in the last expression above by means of Lemma \ref{lem:SmoothWeyl} (with, say, ``$\delta/s$'' playing the role of ``$\delta$'' in Lemma \ref{lem:SmoothWeyl}).
\end{proof}
\begin{rmk}
As an attempt at an alternative to Lemma  \ref{lem:SmoothWeyl} and Corollary \ref{cor:MinorArcBoundk}, one may try to bound $\muNhatalpha$ by carrying out an ``induction-plus-partial summation'' argument analogous to Lemmas \ref{lem:rkExpSum} and  \ref{lem:MajorArcEstk}. However, this leads to a much worse bound than \eqref{eq:MinorArcEstk}, namely
$$\muNhatalpha \, \ll \, N^{1+\delta/k} \, \left(\frac1q \, + \, \frac1{N^{1/k}} \, + \, \frac qN\right)^{\sigma_k},$$
which by the argument in the next section,  
yields $|A| \, \ll \, N^{1-\sigma_k/k+\epsilon}$ for {any} even $s \in [k]$. Not only is this much weaker than Theorem \ref{thm:Sarkozyk}, but also it does not reflect the expected decay of the bound with $s$. In particular, 
for $s$ exceeding the Waring number, we have $|A| \, \ll \, 1$, rendering the previous bound redundant. 

On the other hand, the ``induction-plus-partial summation'' approach of Lemmas \ref{lem:rkExpSum} and  \ref{lem:MajorArcEstk} is very well-suited to the major arcs estimate. If on the other hand, we were to estimate the sum $\sum_{m \ge 1} \,  m^c \, e(\alpha m^k) \, \exp\left(-{m^k}/N \right)$ by partial summation in conjunction with Theorem \ref{thm:WeylkMajorArcs}, then we would get the following ``major arcs'' analogue of Lemma \ref{lem:SmoothWeyl} for any fixed $c \ge 0$.  
$$\sum_{m \ge 1} \, m^c \, e(\alpha m^k) \, \exp\left(-\frac{m^k}N \right) \, = \, \frac{S_k(q, a)}{kq} \, \int_0^\infty \, \frac{e(\beta u) \, e^{-u/N}}{u^{1-(1+c)/k}} \, du \, + \, O(N^{c/k} \, q^{1/2+\epsilon} \, (1+N|\beta|)^{1/2}).$$
But if we use this estimate on each of the factors in 
\eqref{eq:muNPosHalfProd}, then the total error upon expanding out the product becomes too large, giving a much worse estimate than \eqref{eq:MajorArcEstk}.   
\end{rmk}
\section{Completing the proof of Theorem \ref{thm:Sarkozyk}} \label{sec:MainThmCompletion}
\subsection{The major arcs} 
We always assume that $N$ is any positive integer, sufficiently large in terms of $k$. In the spirit of the classical circle method, we define the major arcs by 
\begin{equation}\label{eq:MajorArcsDef}
\Mqa \coloneqq \left\{\alpha \in [0, 1]: \,
\left|\left|\alpha-\frac aq\right|\right|_{\mathbb T} 
\, \le \,  \frac1{qN^{1-1/k}}\right\}, ~~ 1 \le q \le N^{1/k}, ~~ 0 \le a <  q,~~\gcd(a, q)=1,
\end{equation}
where $||x||_{\mathbb T} \coloneqq \min\limits_{r \in \Z} |x-r| = \min\{x, 1-x\}$ for $x \in [0, 1]$. Note that $$\mathcal M(1, 0) = \left[0, \, \frac1{N^{1-1/k}}\right] \, \cup \, \left[1-\frac1{N^{1-1/k}}, \, 1\right],$$ whereas 
$\mathcal M(q, a) = \{\alpha \in [0, 1]: |\alpha-a/q| \le 1/(qN^{1-1/k})\}$ for all the other $q, a$ in \eqref{eq:MajorArcsDef}.

We claim that the $\Mqa$ defined in \eqref{eq:MajorArcsDef} are all pairwise disjoint. Indeed, if $\alpha \in 
\Mqa \cap \mathcal M(q', a')$ for any $q, a, q', a'$ satisfying $2 \le q, \, q' \le N^{1/k}$, $0 \le a < q$, $0 \le a' < q'$, $\gcd(a, q) = \gcd(a', q')=1$, and $a/q \ne a'/q'$, then by the triangle inequality, 
$$\frac1{N^{1-1/k}} \left(\frac1q + \frac1{q'}\right) \ge \left|\alpha - \frac aq\right| + \left|\alpha - \frac{a'}{q'}\right| \ge \left|\frac aq - \frac{a'}{q'}\right| = \frac{|a'q-aq'|}{qq'} \ge \frac1{qq'},$$
leading to $N^{1-1/k} \le q+q' \le 2N^{1/k}$, which is false for all $N \gg 1$. Moreover, for any $\alpha \in \Mqa$ with $2 \le q \le N^{1/k}$, $0 \le a < q$ and $\gcd(a, q)=1$, we must have $1 \le a \le q-1$, so that 
$$\alpha \, \ge \, \frac aq- \frac1{qN^{1-1/k}} \, \ge \, \frac1q\left(1-\frac1{N^{1-1/k}}\right) \, \ge \, \frac1{N^{1/k}}\left(1-\frac1{N^{1-1/k}}\right) \, > \, \frac1{N^{1-1/k}}$$
(where the last inequality recalls that $k \ge 3$ and that $N \gg_k \, 1$), and 
$$\alpha \, \le \, \frac aq + \frac1{qN^{1-1/k}} \, \le \, \frac{q-1}q + \frac1{qN^{1-1/k}} \, = \, 1-\frac1q\left(1-\frac1{N^{1-1/k}}\right) \, < \, 1-\frac1{N^{1-1/k}},$$
showing that $\alpha \not\in \mathcal M(1, 0)$. Consequently, the union
$$\totalMajorArc \coloneqq \bigcup_{1 \le q \le N^{1/k}} \, \bigcup_{\substack{0 \le a < q\\\gcd(a, q)=1}} \Mqa$$
is genuinely a \textit{disjoint} union, so that 
\begin{equation}\label{eq:MajorArcFirstSplit}
\int_{\totalMajorArc} \, |\OneAHatalpha|^2 \, \muNhatalpha \, d\alpha \, = \, 
\sum_{1 \le q \le N^{1/k}} \, \sum_{\substack{0 \le a < q\\\gcd(a, q)=1}} ~~ \int\limits_{\mathcal M(q, a)} ~ |\OneAHatalpha|^2 \, \muNhatalpha \, d\alpha.
\end{equation}
For all $\alpha \in \mathcal M(q, a)$, 
we have $|\alpha-a/q| \, \le \, 1/(q \, N^{1-1/k})$ and $1 \le q \le N^{1/k}$. (For $\alpha \in \mathcal M(1, 0)$, we have this with $q=1$ and with $a \in \{0, 1\}$.) As such, \eqref{eq:MajorArcBoundk} yields
\begin{equation}\label{eq:muNHatUnifErrBound}
\begin{split}
\muNhatalpha \, &\ge \, - \, O_+\left(N^{1-1/k} \, q^{1/2+\epsilon} \, \left(1+ \frac N{q N^{1-1/k}}\right)^{1/2}\right) \, \ge \, -O_+\left(N^{1-1/k} \, q^{\epsilon} \, \left(q+ N^{1/k}\right)^{1/2}\right)\\ &\ge \,  -O_+\left(N^{1-1/k} \, (N^{1/k})^{\epsilon} \, \left(N^{1/k}+ N^{1/k}\right)^{1/2}\right) \, \ge \, -O_+(N^{1 \, + \, (-0.5 \, + \, \epsilon)/k}).
\end{split}
\end{equation}
uniformly in all $\alpha \in \mathcal M$.  
We use the above bound for all $\alpha \in \totalMajorArc \, \sm \, [0, N^{-1-\epsilon/k}]$, and then observe that $\int_{\totalMajorArc \, \sm \, [0, N^{-1-\epsilon/k}]} \, |\OneAHatalpha|^2 \, d\alpha$ is at most 
\begin{equation}\label{eq:SecondMomentSize}
\int_0^1 \, |\OneAHatalpha|^2  \, d\alpha \, = \, \int_0^1 \, \sum_{m, n \in A} \,  e((m-n)\alpha) \, d\alpha \, = \,  \sum_{m, n \in A} \, \int_0^1 \, e((m-n)\alpha) \, d\alpha \, = \, \sum_{m \in A} \, 1 \, = \, |A|,
\end{equation} 
Doing this, and noting that $(0, N^{-1-\epsilon/k}] \subset \mathcal M(1, 0)$, we obtain from \eqref{eq:MajorArcFirstSplit},  
\begin{equation}\label{eq:MajorArcSecondSplit}
\int_{\totalMajorArc} \, |\OneAHatalpha|^2 \, \muNhatalpha \, d\alpha \, \ge \, \int_0^{N^{-1-\epsilon/k}} \, |\OneAHatalpha|^2 \, \muNhatalpha \, d\alpha \, - \, O_+(N^{1 \, + \, (-0.5 \, + \, \epsilon)/k} \, |A|). 
\end{equation}
To estimate the first integral on the right above, note that uniformly in all $\alpha \in (0, N^{-1-\epsilon/k}]$,  
\begin{align}\label{eq:OneAHatDev}
|\OneAHatalpha|^2 &= \sum_{m, n \in A} e((m-n)\alpha) \, = \, \sum_{m ,n \in A} \, \{1+O(|m-n|\alpha)\} \nonumber\\ 
&= \, |A|^2 + O(|A|^2 N \alpha) \, \ge \, |A|^2 \, - \, O_+(|A|^2 \, N^{-\epsilon/k}) \, \ge \, |A|^2/2. 
\end{align}
Moreover, applying \eqref{eq:MajorArcEstk} to $q=1$ and $a = 0$, we have uniformly in all $\alpha \in (0, N^{-1-\epsilon/k}]$, 
\begin{align}\label{eq:nukNHatCloseto0}\allowdisplaybreaks
\muNhatalpha \, &= \, \frac{2 \, \mathcal C_{k, s} \, N}{1+(2\pi \alpha N)^2} \, + \, O(N^{1-1/k} \, (1+N|\alpha|)^{1/2}) \, \gg \, N, 
\end{align}
where in the last bound, we have noted that $0 \le N\alpha \le N^{-\epsilon/k}$. Inserting   
\eqref{eq:OneAHatDev} and \eqref{eq:nukNHatCloseto0}  into \eqref{eq:MajorArcSecondSplit}, we obtain the following clean lower bound for the total contribution of all our major arcs
\begin{equation}\label{eq:MajorArcFinal}
\int_{\totalMajorArc} \, |\OneAHatalpha|^2 \, \muNhatalpha \, d\alpha \, \gg  \, |A|^2 \cdot N^{- \epsilon/k} \, - \, O_+(N^{1 \, + \, (-0.5 \, + \, \epsilon)/k} \, |A|).
\end{equation}
\subsection{The minor arcs}
For any $\alpha \in [0, 1]$, Dirichlet's approximation theorem shows that there exist $a, q \in \Z$ with $1 \le q \le N^{1-1/k}$, $0 \le a < q$, $\gcd(a, q)=1$, and $|\alpha-a/q| \le 1/(qN^{1-1/k})$. If $\alpha \, \in \, [0, 1] \sm \totalMajorArc$, then we must have $q>N^{1/k}$. Hence $q \in (N^{1/k}, N^{1-1/k}]$, so that \eqref{eq:MinorArcEstk} yields
$$\muNhatalpha \, \ll \,  
N^{1+\epsilon/k} \, \left(\frac1q \, + \, \frac1{N^{1/k}} \, + \, \frac qN\right)^{s\, \sigma_k} 
\, 
\ll \, N^{1 \, + \, (-s \, \sigma_k  +\epsilon)/k}$$
uniformly in $\alpha \in [0, 1] \sm \totalMajorArc$, with $\sigma_k$ as defined before  
Theorem \ref{thm:Sarkozyk}. Consequently, by \eqref{eq:SecondMomentSize}, 
\begin{equation}\label{eq:MinorArcFinal}
\int_{[0, 1] \sm \totalMajorArc} \, |\OneAHatalpha|^2 \, \muNhatalpha \, d\alpha \, \ge \, -O_+( N^{1 \, + \, (-s \, \sigma_k  +\epsilon)/k} \, |A|).
\end{equation}
Finally, if $(A-A) \cap \Aks = \emptyset$, then $\int_0^1 \, |\OneAHatalpha|^2 \muNhatalpha \, d\alpha = 0$ by 
\eqref{eq:nukNPlancherel}. 
Hence, from \eqref{eq:MajorArcFinal} and \eqref{eq:MinorArcFinal},
$$|A| \, \ll \, N^{\epsilon/k} \, \left(N^{1 \, + \, (-0.5+\epsilon)/k} \, + \, N^{1 \, + \, (-s \, \sigma_k  +\epsilon)/k}\right) \, \ll \, N^{1 \, - \, \frac1k\min\{s\, \sigma_k, \, 1/2\}+\epsilon}.$$ 
This concludes the proof of Theorem \ref{thm:Sarkozyk}. 
\hfill \qedsymbol

\begin{rmk}
As alluded to in the introduction, our arguments go through for any $c_1, \dots, c_s \ge 0$ for which $C \coloneqq c_1 + \dots + c_s \le k-s$. Indeed, using Lemma \ref{lem:rkExpSum} directly, the same argument as given for \eqref{eq:MajorArcEstkPosHalf} yields the following analogue of \eqref{eq:MajorArcEstk},  uniformly in all $\alpha \in \reals$, $q \in \NatNos$, $a \in \Z$. 
\begin{equation}\label{eq:GenMajorArcEstk} 
\muNhatalpha \, = \, 2 \, \mathcal C_{k, s} \, \SkqaqPower \, 
\int_0^\infty \, \frac{\cos(2 \pi \beta  w)}{w^{1-(s+C)/k}} \, e^{-w/N} \, dw \,  + \, O\left(N^{(s-1+C)/k} \, q^{1/2+\epsilon} \, \left(1+N|\beta|\right)^{1/2}\right).
\end{equation}
To show that the main term here is non-negative (and hence obtain the corresponding analogue of \eqref{eq:MajorArcBoundk}), it suffices to take $w \coloneqq Nu$ in the above integral, and use the general fact that
\begin{equation}\label{eq:Laplace}
\text{For all }\rho \in (0, 1]\text{ and }\xi \in \reals, \text{ we have }\int_0^\infty \, \cos(\xi u) \, u^{\rho-1} \, e^{-u} \, du \, = \, \frac{\Gamma(\rho) \, \cos(\rho \arctan \xi)}{(1+\xi^2)^{\rho/2}} \, > \, 0.
\end{equation}
Here the equality follows by writing $\cos(\xi u) = \Ree(e^{i \, \xi u})$ and using the Laplace--transform identity $\int_0^\infty \, u^{\rho-1} \, e^{-zu} \, du \, = \, \Gamma(\rho) \, z^{-\rho}$ which holds for any $\rho>0$ and any $z$ with $\Ree(z)>0$. Note also that we are applying \eqref{eq:Laplace} for $\rho \coloneqq (s+C)/k$, which lies in $(0, 1]$ precisely because $C \le k-s$.

The only remaining complication in this generality is to obtain the analogue of \eqref{eq:nukNHatCloseto0} uniformly for $\alpha \in (0, N^{-1-\epsilon/k}]$. This can be done by invoking \eqref{eq:GenMajorArcEstk} for $q=1$ and $a=0$, to find that 
    \begin{align}\allowdisplaybreaks
\muNhatalpha \, 
&\gg \,  
\int_0^{N^{1+\epsilon/(2k)}} \, \frac{\cos(2 \pi \alpha w)}{w^{1-(s+C)/k}} \, e^{-w/N} \, dw \,  - \, O_+\left(N^{(s-1+C)/k} \, + \, \int_{N^{1+\epsilon/(2k)}}^\infty \, e^{-w/N} \, w^{(s+C)/k-1} \, dw \, \right)\nonumber\\
&\gg \, 
\int_0^{N^{1+\epsilon/(2k)}} \, e^{-w/N} \, {w^{(s+C)/k-1}} \, dw \,  - \, O_+\left(N^{(s-1+C)/k} \, + \, \int_{N^{1+\epsilon/(2k)}}^\infty \, e^{-w/N} \, w^{(s+C)/k-1} \, dw \, \right), \label{eq:muNhaCloseto0GenIntermed}
\end{align}
where we have noted that $\cos(2 \pi \alpha w) = 1+O(\alpha^2 w^2) = 1+O(N^{-\epsilon/k}) \ge 1/2$. Setting $v \coloneqq w/N$, we see that the integral in the error term of \eqref{eq:muNhaCloseto0GenIntermed} is  
\begin{align}
N^{(s+C)/k} \, \int_{N^{\epsilon/(2k)}}^\infty \, e^{-v} \, v^{(s+C)/k-1} \, dv \, &\le \, N^{(s+C)/k} \, \exp\left(-\frac12 \, N^{\epsilon/(2k)}\right) \, \int_{N^{\epsilon/(2k)}}^\infty \, e^{-v/2} \, v^{(s+C)/k-1} \, dv \nonumber\\ &\ll 
\, \frac1{N^2} \, \int_0^\infty \, e^{-u} \, u^{(s+C)/k-1} \, du \le \,  
\frac1{N^2} \, \Gamma\left(\frac{s+C}k\right) \, \ll \, \frac1{N^2}, \label{eq:GammaTail}
\end{align}
while the integral in the main term of \eqref{eq:muNhaCloseto0GenIntermed} is 
\begin{multline}\label{eq:GammaHead}
N^{(s+C)/k} \left\{\int_0^\infty \, e^{-v} \, v^{(s+C)/k-1} \, dv \, - \, 
\int_{N^{\epsilon/(2k)}}^\infty \, e^{-v} \, v^{(s+C)/k-1} \, dv \right\}\\ = \, N^{(s+C)/k} \, \Gamma\left(\frac{s+C}k\right) \, + \, O\left(\frac1N\right) \, \gg \, N^{(s+C)/k},
\end{multline}
where we have used \eqref{eq:GammaTail} and $(s+C)/k \le 1$ in the equality above. Inserting \eqref{eq:GammaHead} and \eqref{eq:GammaTail} into \eqref{eq:muNhaCloseto0GenIntermed}, we obtain the desired analogue of \eqref{eq:nukNHatCloseto0}, which is 
\begin{equation}
\muNhatalpha \, \gg \, N^{(s+C)/k},\text{ uniformly in all  $\alpha \in (0, N^{-1-\epsilon/k}]$.}  
\end{equation}
The analogue of Corollary \ref{cor:MinorArcBoundk} is $\muNhatalpha \, \ll \, N^{(s+C+\delta)/k} \, \left(1/q \, + \, 1/{N^{1/k}} \, + \, q/N\right)^{s\, \sigma_k}$, and  follows from Lemma \ref{lem:SmoothWeyl} by the same argument. The rest of the proof goes through, with all the main terms and error terms scaled by the same power of $N$, and we again end up with Theorem \ref{thm:Sarkozyk}.
\end{rmk}
\section{The $s$-independent estimate \eqref{eq:UnifsSaving}}\label{sec:UnifsSavingProof}
In this section, we summarize how the arguments given for Theorem \ref{thm:Sarkozyk} can be adapted to establish the bound $|A| \ll_{k, \, s, \, \epsilon} \, N^{1-\sigma_k/k+\epsilon}$ for \textit{any} even $s>0$. We consider the weight function 
\begin{equation}\label{eq:nuNDef}
\nu_N(d) \, \coloneqq \, \bbm_{d \in (-N, N)} ~ R_{k, s}(|d|) \, |d|^{1-s/k} \, \left(1-\frac{|d|}N\right),~~~\text{ where }~~~R_{k, s}(n) \, \coloneqq \, \sum_{\substack{a_1, \dots, a_s \ge 1\\a_1^k+\dots+a_s^k = n}} \, 1;
\end{equation}
hence $\nu_N$ and $R_{k, s}$ play the role of $\mu_N$ and $r_{k, s}$ from the previous sections (and in fact, $R_{k, s}$ is the same as $r_{k, s}$ if we take $c_1 = \dots = c_s = 0$). Arguments entirely similar to (and much simpler than) those given for Lemmas \ref{lem:rkExpSum} and  \ref{lem:MajorArcEstk} yield their following analogues,  uniformly in all $T \ge 1$, $N \in \NatNos$, $q \in \NatNos$, $a \in \Z$ and $\alpha \in \reals$, with $\beta \coloneqq \alpha-a/q$ and $C_{k, s} \coloneqq \Gamma(1+1/k)^s/\Gamma(s/k)>0$. 
\begin{align}
\sum_{n \le T} r_{k, s}(n) e(n \alpha) &= C_{k, s}\SkqaqPower \, \int_0^T \,  \frac{e(\beta w)}{w^{1-s/k}} \, dw \,  + \, O(T^{(s-1)/k} q^{1/2+\epsilon} (1+T|\beta|)^{1/2}), \nonumber \\
\nukNhatalpha &= C_{k, s} \SkqaqPower \, 
N \, \left(\frac{\sin(\pi N \beta)}{\pi N \beta}\right)^2 \, + \, O(N^{1-1/k} q^{1/2+\epsilon} (1+N|\beta|)^{1/2}) \label{eq:MajorArcEstkOlderApproach}\\
&\ge - \, O_+(N^{1-1/k} \,  q^{1/2+\epsilon} \, (1+N|\beta|)^{1/2}). \label{eq:MajorArcBoundkOlderApproach}
\end{align}
The first two estimates are true for \textit{any} fixed $s \in \NatNos$, and \eqref{eq:MajorArcBoundkOlderApproach} follows from \eqref{eq:MajorArcEstkOlderApproach} for any even $s \in \NatNos$. Unlike Corollary \ref{cor:MinorArcBoundk}, the minor arc estimate also requires an induction on $s$ in the style of \eqref{eq:rskInduct}, but as expected from the remark after Corollary \ref{cor:MinorArcBoundk}, we get the much weaker saving 
$$\nuNhatalpha \,  \ll \,  
N^{1+\delta/k} \, \left(\frac1q \, + \, \frac1{N^{1/k}} \, + \, \frac qN\right)^{\sigma_k},$$
uniformly in all $N \in \NatNos$, and in all $\alpha \in \reals$, $q \in \NatNos$, $a \in \Z$ satisfying $\gcd(a, q)=1$ and $|\alpha - a/q| \le q^{-2}$. The rest of the proof in section \ref{sec:MainThmCompletion} goes through with the \textit{same} 
major and minor arcs, and yields the weaker bound $|A| \ll_{k, \, s, \, \epsilon} \, N^{1-\sigma_k/k+\epsilon}$, but one that is valid for \textit{any} even $s>0$. 

The case $k=3$ is particularly noteworthy in terms of simplicity: Here we don't even need a minor arcs estimate, for given any $\alpha \in \reals$, 
there exist coprime $a, q \in \Z$ satisfying $|\alpha-a/q| \, \le \, 1/(qN^{1/2})$ with $1 \le q \le N^{1/2}$. By a computation  analogous to \eqref{eq:muNHatUnifErrBound}, this yields the lower bound $\nuNhatalpha \, \ge \, - \, O_+(N^{11/12+\epsilon/2})$ uniformly in \textit{all} $\alpha \in [0, 1]$. Hence, we obtain 
\begin{align*}
 0 \, &= \, \int_0^1 \, |\OneAHatalpha|^2 \, \nuNhatalpha \, d\alpha \, = \, \int_0^{1/N^{1+\epsilon/2}} \, |\OneAHatalpha|^2 \, \nuNhatalpha \, d\alpha \, + \, \int_{1/N^{1+\epsilon/2}}^1 \, |\OneAHatalpha|^2 \, \nuNhatalpha \, d\alpha\\
 &\gg |A|^2 \cdot N^{-\epsilon/2} \, - \, O_+(N^{11/12+\epsilon/2} \, |A|), 
\end{align*}
leading to $|A| \, \ll \, N^{11/12+\epsilon}$ for any even $s>0$, and any $A \subset [N]$ satisfying  $(A-A) \cap \mathcal A_{3, s} = \emptyset$. 
\section*{Acknowledgements}
This project arose out of discussions at the Simons Summer Schools and Workshops, which took place at the Renyi Institute from May 11th to June 22nd. The authors would like to thank the Ed\H{o}s Center at the Renyi Institute for their hospitality, and the organizers of the Simons programs for their meticulous hardwork in organizing these events. The authors would also like to thank M\'at\'e Matolcsi and Imre Ruzsa for useful discussions and for their interest in the problem. {The third author was
partially supported from the Simons Foundation grant Grant MPS-TSM-854813 and NFKIH Excellence 15421.} The fourth author acknowledges support from the Charles University Grant PRIMUS/25/SCI/008.
\section*{Statement on AI use}
{The authors used OpenAI’s ChatGPT, specifically GPT-5.5 to  search for references and test parameter choices. The mathematical arguments and the exposition are the authors’ own, and the authors take full responsibility for the contents of the paper.}
\section*{Conflict of interest}
The authors declare no conflict of interest.

\bibliographystyle{amsalpha}
\bibliography{SarkozyksReferences}

@article{Furstenberg77,
AUTHOR={Furstenberg, H.},
TITLE = {Ergodic behavior of diagonal measures and a theorem of \uppercase{S}zemer\'edi on arithmetic progressions},
JOURNAL = {J.~Analyse Math.},
FJOURNAL = {Journal d'Analyse Math\'ematique},
    VOLUME = {31},
      YEAR = {1977},
     PAGES = {204--256},
}

@article{Sarkozy78,
AUTHOR={S{\'a}rk{\"o}zy, A.},
TITLE = {On difference sets of sequences of integers. \uppercase{I}},
JOURNAL = {Acta Math. Acad. Sci. Hungar.},
FJOURNAL = {Acta Mathematica Academiae Scientiarum Hungaricae},
    VOLUME = {31},
    NUMBER = {1--2},
      YEAR = {1978},
     PAGES = {125--149},
}

@misc{GreenSawhney,
AUTHOR={Green, B. and Sawhney, M.},
TITLE = {New bounds for the \uppercase{F}urstenberg--\uppercase{S}\'ark\"ozy theorem},
NOTE = {arXiv preprint},
YEAR = {2024},
}

@misc{IntersectiveGroup,
AUTHOR={Adajar, C. F. E. and Agrawal, R. and Choudhuri, M. R. and Chuah, C. Y. and Fan, S. and Hegde, S. and Lott, A. and Nandakumar, K. and Ponagandla, N. R.},
TITLE = {Extensions of the \uppercase{F}urstenberg--\uppercase{S}\'ark\"ozy theorem via the arithmetic level-$d$ inequality},
NOTE = {arXiv preprint},
YEAR = {2022026},
}

@article{DoyleRice,
AUTHOR={Doyle, J. R. and Rice, A.},
TITLE = {Multivariate polynomial values in difference sets},
JOURNAL = {Discrete Anal.},
FJOURNAL = {Discrete Analysis},
      YEAR = {2021},
     PAGES = {Paper No. 11, 46 pp.},
}

@article{Bour17,
AUTHOR={Bourgain, J.},
TITLE = {On the \uppercase{V}inogradov mean value},
JOURNAL = {Proc.~Steklov Inst.~Math.},
FJOURNAL = {Proceedings of the Steklov Institute of Mathematics},
    VOLUME = {296},
      YEAR = {2017},
     PAGES = {30--40},
}

@article{BV21,
AUTHOR={Br\"{u}dern, J. and Vaughan, R.},
TITLE = {A \uppercase{M}ontgomery–\uppercase{H}ooley Theorem for sums of two cubes},
JOURNAL = {Eur.~J.~Math.},
FJOURNAL = {European Journal of Mathematics},
    VOLUME = {7},
      YEAR = {2021},
     PAGES = {1616--1644},
}

@book{Vau81,
    AUTHOR = {Vaughan, R.},
     TITLE = {The Hardy--Littlewood Method},
 PUBLISHER = {Cambridge Tracts in Mathematics, Cambridge University Press},
      VOLUME = {80},
      YEAR = {1981},
     PAGES = {xii+364 pp.},
}

@article{green,
  title={On S{\'a}rk{\"o}zy’s theorem for shifted primes},
  author={Green, Ben},
  journal={Journal of the American Mathematical Society},
  volume={37},
  number={4},
  pages={1121--1201},
  year={2024}
}

@misc{fanlott,
AUTHOR={Fan, S. and Lott, A.},
TITLE = {The van der \uppercase{C}orput property for sums of two squares},
NOTE = {arXiv preprint},
YEAR = {2026},
}

@article{Kamae,
  author  = {Kamae, T. and Mend{\`e}s France, M.},
  title   = {Van der Corput's difference theorem},
  journal = {Israel Journal of Mathematics},
  volume  = {31},
  number  = {3--4},
  pages   = {335--342},
  year    = {1978},
  doi     = {10.1007/BF02761498},
  url     = {https://link.springer.com/article/10.1007/BF02761498}
}

@book{mont94,
  title     = {Ten Lectures on the Interface Between Analytic Number Theory and Harmonic Analysis},
  author    = {Montgomery, Hugh L.},
  series    = {CBMS Regional Conference Series in Mathematics},
  volume    = {84},
  year      = {1994},
  publisher = {American Mathematical Society},
  address   = {Providence, RI},
  isbn      = {978-0-8218-0737-8}
}

@article{materuzsa,
  title={Difference sets and positive exponential sums I. General properties},
  author={Matolcsi, M{\'a}t{\'e} and Ruzsa, Imre Z},
  journal={Journal of Fourier Analysis and Applications},
  volume={20},
  number={1},
  pages={17--41},
  year={2014},
  publisher={Springer}
}

@article{nin,
  title={Positive exponential sums and odd polynomials},
  author={Nin{\v{c}}evi{\'c}, Marina and Slijep{\v{c}}evi{\'c}, Sini{\v{s}}a},
  journal={Rad Hrvatske akademije znanosti i umjetnosti. Matemati{\v{c}}ke znanosti},
  number={519= 18},
  pages={35--53},
  year={2014},
  publisher={Hrvatska akademija znanosti i umjetnosti}
}

@article{mat21,
  title={Difference sets and positive exponential sums. II: cubic residues in cyclic groups},
  author={Matolcsi, M{\'a}t{\'e} and Ruzsa, Imre Z},
  journal={Proceedings of the Steklov Institute of Mathematics},
  volume={314},
  number={1},
  pages={138--143},
  year={2021},
  publisher={Springer}
}

@incollection{Ruzsa1984,
  author    = {Ruzsa, I. Z.},
  title     = {Connections between the uniform distribution of a sequence and its differences},
  booktitle = {Topics in Classical Number Theory, Vol. I, II (Budapest, 1981)},
  editor    = {Hal{\'a}sz, G.},
  series    = {Colloquia Mathematica Societatis J{\'a}nos Bolyai},
  volume    = {34},
  pages     = {1419--1443},
  publisher = {North-Holland},
  address   = {Amsterdam, New York},
  year      = {1984}
}

@article{Rice19,
AUTHOR={Rice, A.},
TITLE = {A maximal extension of the best-known bounds for the \uppercase{F}urstenberg–\uppercase{S}\'ark\"ozy theorem},
JOURNAL = {Acta Arith.},
FJOURNAL = {Acta Arithmetica},
    VOLUME = {187},
      YEAR = {2019},
     PAGES = {1--41},
}

@article{KW10,
  author  = {Kawada, K. and Wooley, T.},
  title   = {Relations between exceptional sets for additive problems},
  journal = {J.~Lond.~Math.~Soc.},
  series  = {2},
  volume  = {82},
  number  = {2},
  pages   = {437--458},
  year    = {2010},
  doi     = {10.1112/jlms/jdq036}
}

@article{W15,
  author  = {Wooley, T.},
  title   = {Sums of three cubes, {II}},
  journal = {Acta Arith.},
  volume  = {170},
  number  = {1},
  pages   = {73--100},
  year    = {2015},
  doi     = {10.4064/aa170-1-6},
  eprint  = {1502.01944},
  archivePrefix = {arXiv},
  primaryClass = {math.NT}
}

@article{MC84,
  author  = {McCurley, K.~S.},
  title   = {An effective seven cube theorem},
  journal = {J.~Number Theory},
  volume  = {19},
  number  = {2},
  pages   = {176--183},
  year    = {1984},
  doi     = {10.1016/0022-314X(84)90100-8}
}

@article{Lin43,
  author  = {Linnik, Y.~V.},
  title   = {On the representation of large numbers as sums of seven cubes},
  journal = {Recueil Math{\'e}matique. Nouvelle S{\'e}rie},
  volume  = {12(54)},
  number  = {2},
  pages   = {218--224},
  year    = {1943}
}
\bigskip

\end{document}